\begin{document}
\title[Letterplace and co-letterplace ideals of posets]{Letterplace and 
co-letterplace ideals of posets}

\author{Gunnar Fl{\o}ystad}
\address{Matematisk institutt\\
        University of Bergen} 
\email{gunnar@mi.uib.no}
\author{Bj\o rn M\o ller Greve}
\address{Matematisk institutt\\
         University of Bergen}
\email{Bjorn.Greve@uib.no}

\author{J\"urgen Herzog}
\address{Matematisches Institut \\
         Universit\"at Essen}
\email{juergen.herzog@gmail.com}



\keywords{Monomial ideal, letterplace ideal, co-letterplace
ideal, Alexander dual, 
poset, regular sequence,
determinantal ideal, strongly stable ideal, Ferrers ideal,
hypergraph ideal, face ideal, multichain ideal,
generalized Hibi ideal}
\subjclass[2000]{Primary: 13F20, 05E40; Secondary: 06A11}
\date{\today}

\maketitle


\theoremstyle{plain}
\newtheorem{theorem}{Theorem}[section]
\newtheorem{corollary}[theorem]{Corollary}
\newtheorem*{main}{Main Theorem}
\newtheorem{lemma}[theorem]{Lemma}
\newtheorem{proposition}[theorem]{Proposition}

\theoremstyle{definition}
\newtheorem{definition}[theorem]{Definition}

\theoremstyle{remark}
\newtheorem{notation}[theorem]{Notation}
\newtheorem{remark}[theorem]{Remark}
\newtheorem{example}[theorem]{Example}
\newtheorem{claim}{Claim}
\newtheorem{question}[theorem]{Question}


\newcommand{\psp}[1]{{{\bf P}^{#1}}}
\newcommand{\psr}[1]{{\bf P}(#1)}
\newcommand{\opw}{\op_{\psr{W}}}

\newcommand{\ini}[1]{\text{in}(#1)}
\newcommand{\gin}[1]{\text{gin}(#1)}
\newcommand{\kr}{{\Bbbk}}
\newcommand{\pd}{\partial}
\newcommand{\vardel}{\partial}
\renewcommand{\tt}{{\bf t}}


\newcommand{\coh}{{{\text{{\rm coh}}}}}
\newcommand{\Poset}{{\mathbf {Poset}}}


\newcommand{\modv}[1]{{#1}\text{-{mod}}}
\newcommand{\modstab}[1]{{#1}-\underline{\text{mod}}}

\newcommand{\sut}{{}^{\tau}}
\newcommand{\sumit}{{}^{-\tau}}
\newcommand{\til}{\thicksim}

\newcommand{\totp}{\text{Tot}^{\prod}}
\newcommand{\dsum}{\bigoplus}
\newcommand{\dprod}{\prod}
\newcommand{\lsum}{\oplus}
\newcommand{\lprod}{\Pi}

\newcommand{\La}{{\Lambda}}

\newcommand{\sirstj}{\circledast}

\newcommand{\she}{\EuScript{S}\text{h}}
\newcommand{\cm}{\EuScript{CM}}
\newcommand{\cmd}{\EuScript{CM}^\dagger}
\newcommand{\cmri}{\EuScript{CM}^\circ}
\newcommand{\cler}{\EuScript{CL}}
\newcommand{\clerd}{\EuScript{CL}^\dagger}
\newcommand{\clerri}{\EuScript{CL}^\circ}
\newcommand{\gor}{\EuScript{G}}
\newcommand{\gF}{\mathcal{F}}
\newcommand{\gG}{\mathcal{G}}
\newcommand{\gM}{\mathcal{M}}
\newcommand{\gE}{\mathcal{E}}
\newcommand{\gD}{\mathcal{D}}
\newcommand{\gI}{\mathcal{I}}
\newcommand{\gP}{\mathcal{P}}
\newcommand{\gK}{\mathcal{K}}
\newcommand{\gL}{\mathcal{L}}
\newcommand{\gS}{\mathcal{S}}
\newcommand{\gC}{\mathcal{C}}
\newcommand{\gO}{\mathcal{O}}
\newcommand{\gJ}{\mathcal{J}}
\newcommand{\gU}{\mathcal{U}}
\newcommand{\mm}{\mathfrak{m}}

\newcommand{\dlim} {\varinjlim}
\newcommand{\ilim} {\varprojlim}

\newcommand{\CM}{\text{CM}}
\newcommand{\Mon}{\text{Mon}}


\newcommand{\Kom}{\text{Kom}}


\newcommand{\EH}{{\mathbf H}}
\newcommand{\res}{\text{res}}
\newcommand{\inhom}{{\underline{\text{Hom}}}}
\newcommand{\Ext}{\text{Ext}}
\newcommand{\Tor}{\text{Tor}}
\newcommand{\ghom}{\mathcal{H}om}
\newcommand{\gext}{\mathcal{E}xt}
\newcommand{\id}{\text{{id}}}
\newcommand{\im}{\text{im}\,}
\newcommand{\codim} {\text{codim}\,}
\newcommand{\resol}{\text{resol}\,}
\newcommand{\rank}{\text{rank}\,}
\newcommand{\lpd}{\text{lpd}\,}
\newcommand{\coker}{\text{coker}\,}
\newcommand{\supp}{\text{supp}\,}
\newcommand{\Ad}{A_\cdot}
\newcommand{\Bd}{B_\cdot}
\newcommand{\Fd}{F_\cdot}
\newcommand{\Gd}{G_\cdot}


\newcommand{\sus}{\subseteq}
\newcommand{\sups}{\supseteq}
\newcommand{\pil}{\rightarrow}
\newcommand{\vpil}{\leftarrow}
\newcommand{\rpil}{\leftarrow}
\newcommand{\lpil}{\longrightarrow}
\newcommand{\inpil}{\hookrightarrow}
\newcommand{\pils}{\twoheadrightarrow}
\newcommand{\projpil}{\dashrightarrow}
\newcommand{\dotpil}{\dashrightarrow}
\newcommand{\adj}[2]{\overset{#1}{\underset{#2}{\rightleftarrows}}}
\newcommand{\mto}[1]{\stackrel{#1}\longrightarrow}
\newcommand{\vmto}[1]{\stackrel{#1}\longleftarrow}
\newcommand{\mtoelm}[1]{\stackrel{#1}\mapsto}

\newcommand{\eqv}{\Leftrightarrow}
\newcommand{\impl}{\Rightarrow}

\newcommand{\iso}{\cong}
\newcommand{\te}{\otimes}
\newcommand{\into}[1]{\hookrightarrow{#1}}
\newcommand{\ekv}{\Leftrightarrow}
\newcommand{\equi}{\simeq}
\newcommand{\isopil}{\overset{\cong}{\lpil}}
\newcommand{\equipil}{\overset{\equi}{\lpil}}
\newcommand{\ispil}{\isopil}
\newcommand{\vvi}{\langle}
\newcommand{\hvi}{\rangle}
\newcommand{\susneq}{\subsetneq}
\newcommand{\sgn}{\text{sign}}


\newcommand{\xd}{\check{x}}
\newcommand{\ortog}{\bot}
\newcommand{\tL}{\tilde{L}}
\newcommand{\tM}{\tilde{M}}
\newcommand{\tH}{\tilde{H}}
\newcommand{\tvH}{\widetilde{H}}
\newcommand{\tvh}{\widetilde{h}}
\newcommand{\tV}{\tilde{V}}
\newcommand{\tS}{\tilde{S}}
\newcommand{\tT}{\tilde{T}}
\newcommand{\tR}{\tilde{R}}
\newcommand{\tf}{\tilde{f}}
\newcommand{\ts}{\tilde{s}}
\newcommand{\tp}{\tilde{p}}
\newcommand{\tr}{\tilde{r}}
\newcommand{\tfst}{\tilde{f}_*}
\newcommand{\empt}{\emptyset}
\newcommand{\bfa}{{\bf a}}
\newcommand{\bfb}{{\bf b}}
\newcommand{\bfd}{{\bf d}}
\newcommand{\bfe}{{\mathbf e}}
\newcommand{\bff}{{\mathbf f}}
\newcommand{\bfl}{{\bf \ell}}
\newcommand{\la}{\lambda}
\newcommand{\bfen}{{\mathbf 1}}
\newcommand{\ep}{\epsilon}
\newcommand{\en}{r}
\newcommand{\tu}{s}
\newcommand{\Ga}{\Gamma}
\newcommand{\hatphi}{\hat{\phi}}

\newcommand{\ome}{\omega_E}

\newcommand{\bevis}{{\bf Proof. }}
\newcommand{\demofin}{\qed \vskip 3.5mm}
\newcommand{\nyp}[1]{\noindent {\bf (#1)}}
\newcommand{\demo}{{\it Proof. }}
\newcommand{\demodone}{\demofin}
\newcommand{\parg}{{\vskip 2mm \addtocounter{theorem}{1}  
                   \noindent {\bf \thetheorem .} \hskip 1.5mm }}

\newcommand{\red}{{\text{red}}}
\newcommand{\lcm}{{\text{lcm}}}


\newcommand{\dl}{\Delta}
\newcommand{\cdel}{{C\Delta}}
\newcommand{\cdelp}{{C\Delta^{\prime}}}
\newcommand{\dlst}{\Delta^*}
\newcommand{\Sdl}{{\mathcal S}_{\dl}}
\newcommand{\lk}{\text{lk}}
\newcommand{\lkd}{\lk_\Delta}
\newcommand{\lkp}[2]{\lk_{#1} {#2}}
\newcommand{\del}{\Delta}
\newcommand{\delr}{\Delta_{-R}}
\newcommand{\dd}{{\dim \del}}
\newcommand{\dis}[1]{\underline{#1}}

\renewcommand{\aa}{{\bf a}}
\newcommand{\bb}{{\bf b}}
\newcommand{\cc}{{\bf c}}
\newcommand{\xx}{{\bf x}}
\newcommand{\yy}{{\bf y}}
\newcommand{\zz}{{\bf z}}
\newcommand{\mv}{{\xx^{\aa_v}}}
\newcommand{\mF}{{\xx^{\aa_F}}}

\newcommand{\Symm}{\text{Sym}}
\newcommand{\pnm}{{\bf P}^{n-1}}
\newcommand{\opnm}{{\go_{\pnm}}}
\newcommand{\ompnm}{\omega_{\pnm}}

\newcommand{\pn}{{\bf P}^n}
\newcommand{\hele}{{\mathbb Z}}
\newcommand{\nat}{{\mathbb N}}
\newcommand{\rasj}{{\mathbb Q}}

\newcommand{\dt}{\bullet}
\newcommand{\st}{\hskip 0.5mm {}^{\rule{0.4pt}{1.5mm}}}              
\newcommand{\disk}{\scriptscriptstyle{\bullet}}

\newcommand{\cF}{F_\dt}
\newcommand{\pol}{f}

\newcommand{\Rn}{{\mathbb R}^n}
\newcommand{\An}{{\mathbb A}^n}
\newcommand{\frg}{\mathfrak{g}}
\newcommand{\PW}{{\mathbb P}(W)}

\newcommand{\pos}{{\mathcal Pos}}
\newcommand{\g}{{\gamma}}
\newcommand{\ov}[1]{\overline{#1}}
\newcommand{\ovv}[1]{\overline{\overline{#1}}}
\newcommand{\cJ}{\mathcal J}
\newcommand{\cC}{\mathcal C}
\newcommand{\cK}{\mathcal K}
\newcommand{\cS}{\mathcal S}
\newcommand{\ord}{\prec}
\newcommand{\ordl}{\preceq}
\newcommand{\ords}{\prec_s}
\newcommand{\ordls}{\preceq_s}

\newcommand{\ovm}{\overline{m}}
\newcommand{\ovmp}{\overline{m^\prime}}
\newcommand{\op}{{op}}
\newcommand{\bihom}[2]{\overset{#1}{\underset{#2}{\rightleftarrows}}}

\def\opn#1#2{\def#1{\operatorname{#2}}} 
\opn\Hom{Hom}
\opn\height{height}
\opn\projdim{proj\,dim}
\opn\Min{Min}

\def\CC{{\mathbb C}}
\def\GG{{\mathbb G}}
\def\ZZ{{\mathbb Z}}
\def\NN{{\mathbb N}}
\def\RR{{\mathbb R}}
\def\OO{{\mathbb O}}
\def\QQ{{\mathbb Q}}
\def\VV{{\mathbb V}}
\def\PP{{\mathbb P}}
\def\EE{{\mathbb E}}
\def\FF{{\mathbb F}}
\def\AA{{\mathbb A}}

\def\pp{{\mathfrak p}}

\begin{abstract}
To a natural number $n$, a finite partially ordered set $P$
and a poset ideal $\cJ$ in the poset $\Hom(P,[n])$ of isotonian maps
from $P$ to the chain on $n$ elements,  we associate 
two monomial ideals, the letterplace ideal $L(n,P;\cJ)$ and the
co-letterplace ideal $L(P,n;\cJ)$.
These ideals give a unified understanding
of a number of ideals studied in monomial ideal theory in recent years.
By cutting down these ideals by regular 
sequences of variable differences we obtain:
multichain ideals and generalized Hibi type
ideals, initial ideals of determinantal ideals, strongly stable ideals,
$d$-partite $d$-uniform ideals, Ferrers ideals, edge ideals of cointerval
$d$-hypergraphs, and uniform face ideals.
\end{abstract}

\section*{Introduction}

Monomial ideals arise as initial ideals of polynomial ideals.
For natural classes of polynomial ideals, like determinantal ideals
of generic matrices, of generic symmetric matrices, and of 
Pfaffians of skew-symmetric matrices, 
their Gr\"obner bases and initial ideals \cite{Stu}, \cite{HeTr},
\cite{CHT} have been computed. 
In full generality one has the class of Borel-fixed
ideals (in characteristic $0$ these are the strongly stable ideals), 
which arise as initial ideals
of any polynomial ideal after a generic change of coordinates. 
This is a very significant class in computational algebra \cite{BaSt},
\cite{Green}.

Monomial ideals have also grown into an active research area in itself.
In particular one is interested in their resolution, and 
in properties like shellability
(implying Cohen-Macaulayness), and linear quotients (implying linear 
resolution). Classes that in particular have been studied are 
generalized Hibi ideals \cite{EHM}, 
$d$-uniform 
hypergraph ideals \cite{NaRe}, Ferrers ideals \cite{CoNa1}, \cite{NaRe},
uniform face ideals \cite{Co} and \cite{HeHi}, and cointerval 
$d$-hypergraph ideals \cite{DoEn}. 

This article presents a unifying framwork for these seemingly varied
classes of monomial 
ideals by introducing
the classes of letterplace and co-letterplace ideals associated to a
a natural number $n$, a
finite partially ordered set $P$, and a poset ideal $\cJ$ in the poset 
$\Hom(P,[n])$ of isotone maps $P \pil [n]$, where $[n]$ is the chain on 
$n$ elements. Many basic results of the abovementioned articles follow
from the general results we give here.

As it turns out, most of the abovementioned classes of ideals are not 
letterplace and co-letterplace ideals. Rather they {\it derive} from
these ideals by dividing out by a {\it regular sequence} of variable
differences. The main technical results of the present paper is to give criteria
for when such a sequence of variable differences is regular,
Theorems  \ref{QLPThmNPR}, \ref{QLPThmPNR}, \ref{CLPThmPNRJ}, and
\ref{CLPThmNPRJ}.
Thus we get classes of ideals, with
the same homological properties as letterplace and co-letterplace ideals.
Also we show that letterplace and co-letterplace ideals in themselves
may not be derived from other monomial ideals by cutting down by 
a regular sequence of varibale differences, Lemma \ref{Lem-LPSep}. 
These ideals therefore
have the flavor of beeing "free" objects
in the class of monomial ideals. We see this as accounting for many of
their nice properties, see \cite{DaFlNeCoLP} and \cite{FlNe}.

In \cite{EHM} V.Ene, F.Mohammadi, and the third author introduced
the classes of generalized Hibi ideals, associated to natural numbers
$ s \leq n$ and a finite partially ordered set $P$, and investigated
these ideals. This article both generalizes this, and
provides a hightened understanding of these ideals. There is
an extra parameter $s$ involved here, but we show that these ideals
can be understood as the letterplace ideal associated to the natural
number $n$ and 
the partially ordered set $P \times [n-s+1]$, after we divide
this ideal out by a regular sequence of variable differences.

\medskip
The {\it $n$'th letterplace ideal} associated to $n$ and $P$ is the monomial ideal,
written $L(n,P)$, 
generated by all monomials
\[ x_{1,p_1}x_{2,p_2} \cdots x_{n,p_n} \]
where $p_1 \leq p_2 \leq \cdots \leq p_n$ is a multichain in $P$.
These ideals $L(n,P)$ were shown in \cite{EHM} to be Cohen-Macaulay (CM) ideals
(actually shellable) of codimension equal to the cardinality of $P$.
These ideals are all generated in degree $n$.
If $P$ is the antichain on $d$ elements, then $L(n,P)$ is a complete
intersection, and defines a quotient ring of mulitplicity $n^d$.
If $P$ is the chain on $d$ elements, then $L(n,P)$ is the initial ideal
of the ideal of maximal minors of a generic $n \times (n+d-1)$ matrix.
The quotient ring has multiplicity $\binom{n+d-1}{d}$. These
are respectively the maximal and minimal multiplicity of CM ideals
of codimension $d$ generated in degree $n$. As $P$ varies 
among posets of cardinality $d$ we therefore
get ideals interpolating between these extremal cases. 

Its Alexander dual, the {\it $n$'th co-letterplace ideal} associated to $n$
and $P$, written $L(P,n)$, is the ideal generated by all monomials 
\[ \prod_{p \in P} x_{p,i_p} \]
where $p < q$ in $P$ implies $i_p \leq i_q$. This is an ideal with linear
quotients \cite{EHM}, and therefore linear resolution.

\medskip
The Cohen-Macaulay ideals $L(n,P)$ are all generated in a single
degree $n$. To obtain CM ideals with varying degrees on generators,
we now add an extra layer of structure.
Given a poset ideal $\cJ$ in the poset $\Hom(P,[n])$ of isotone
maps $P \pil [n]$, we get first more generally the co-letterplace subideal
$L(P,n;\cJ) \sus L(P,n)$. This ideal also has linear quotients, 
Theorem \ref{CLPThmPosid}.
When $P$ is a chain on $d$ elements, all strongly stable
ideals generated in degree $d$ are regular quotients of these co-letterplace
ideals, Example \ref{Subsec-CLPSS}. 
As $P$ varies we therefore get a substantial generalization of
the class of strongly stable ideals generated in a single degree, 
and with much the same nice
homological behaviour.

The Alexander dual of $L(P,n;\cJ)$ we denote by $L(n,P;\cJ)$, and
is explicitly described in Theorem \ref{CLPThmAd}. Since the
former ideal has linear resolutions, by
\cite{ER} the latter ideal is  
Cohen-Macaulay and it contains the letterplace ideal $L(n,P)$. 
Even when $P$ is the chain on $d$ elements, all $h$-vectors of embedding
dimension $d$ of 
graded Cohen-Macaulay ideals (in a polynomial ring), may be realized for
such ideals $L(P,n;\gJ)$. 
This is therefore a very large class of Cohen-Macaulay ideals.

\medskip
  Dividing out a monomial ring $S/I$ by an difference of variables $x_a - x_b$,
corresponds to setting the variables $x_a$ and $x_b$ equal in $I$ to obtain
a new monomial ideal $J$. In this article we therefore naturally 
introduce the notion of separation
of variables, Definition \ref{Def-QLPSep}, 
of a monomial ideals: $I$ is obtained from 
$J$ by a {\it separation} 
of variables, if $x_a - x_b$ is a regular element for $S/I$. Surprisingly
this simple and natural notion does not seem to have been a topic of 
study in itself before for monomial ideals, but see \cite{Fl}.
In particular the behaviour of Alexander duality when
dividing out by such a regular sequence of variable differences, is given in
Proposition \ref{QLPProAlexDual}, and Theorems \ref{QLPThmAd}
and \ref{QLPThmAdJ}.

\medskip
  After the appearance of this article as a preprint, a number of further
investigations has been done on
letterplace and co-letterplace ideals. The article \cite{KJM} studies
more generally ideals $L(P,Q)$ where both $P$ and $Q$ are 
finite partially orded sets, and \cite{HeQuSh} investigates Alexander duality
of such ideals. In \cite{DaFlNeLP} resolutions of letterplace ideals
$L(n,P)$ are studied, in particular their multigraded Betti numbers are
computed. \cite{DaFlNeCoLP} gives explicitly the
linear resolutions of co-letterplace ideal $L(P,n;\gJ)$, thereby
generalizing the Eliahou-Kervaire resolution for strongly stable ideals 
generated in a single degree. It computes
the canonical modules of the Stanley-Reisner rings of 
letterplace ideals $L(n,P;\gJ)$. They
have the surprising property of being multigraded ideals in 
these Stanley-Reisner rings. A related and 
remarkable consequence is that the simplicial complexes
associated to letterplace ideals $L(n,P;\gJ)$ are triangulations of balls.
Their boundaries are therefore triangulations of spheres, and this
class of sphere triangulations 
comprehensively generalizes the class of Bier spheres \cite{BPSZ}.
The notion of separation is further investigated in \cite{HeRa} and in 
\cite{AlBiHeLu}, which shows that separation corresponds
to a deformation of the monomial ideal, and identifies the deformation
directions in the cotangent cohomology it corresponds to. 
In \cite{FlNe} deformations of letterplace ideals $L(2,P)$ are 
computed when the Hasse diagram has the structure of a rooted tree. 
The situation is remarkably nice. These ideals are unobstruced, 
and the full deformation family can be explicitly computed. This 
deformed family has a polynomial ring as a base ring, and the ideal
of the full family is a rigid ideal. In some simple example cases these
are the ideals of $2$-minors of a generic $2 \times n$ matrix, and the ideal
of Pfaffians of a generic skew-symmetric $5 \times 5$ matrix.

\medskip
The organization of the paper is as follows.
In Section \ref{LPSec} we define ideals $L(Q,P)$ associated to pairs
of posets $Q$ and $P$. In particular for the totally ordered poset $[n]$ on
$n$ elements, we introduce the letterplace ideals
$L([n],P)$ and co-letterplace ideals $L(P,[n])$. We investigate
how they behave under Alexander duality.
In Section \ref{QLPSec} we 
study when a sequence of variable differences
is regular for letterplace and co-letterplace ideals. We also 
define the notion of separation.
Section \ref{ExQLPSec} gives classes of ideals, including generalized
Hibi ideals
and initial ideals
of determinantal ideals, which are quotients of letterplace ideals
by a regular sequence.
Section \ref{FacetSec} describes in more detail the generators and
facets of various letterplace and co-letterplace ideals.
Section \ref{CLPSec} considers poset ideals $\cJ$ in $\Hom(P,[n])$ and 
the associated co-letterplace ideal $L(P,n;\cJ)$. We show it has
linear resolution, and compute its Alexander dual $L(n,P;\gJ)$.
Section \ref{ExCLPSec} gives classes of ideals which are quotients
of co-letterplace ideals by a regular sequence. This includes
strongly stable ideals, $d$-uniform $d$-partite hypergraph ideals,
Ferrers ideals, and uniform face ideals.
The last sections \ref{AdSec} and \ref{RegSec}
contain proofs of basic results in this paper on when sequences of
variable differences are regular, and how Alexander duality behaves
when cutting down by such a regular sequence.

\section{Letterplace ideals and their Alexander duals} \label{LPSec}

If $P$ is a partially ordered set (poset), a {\it poset ideal} $J \sus P$ is
a subset of $P$ such that $q \in J$ and $p \in P$ with $p \leq q$,
implies $p \in J$. The term {\it order ideal} is also much used in the
literature for this notion. If $S$ is a subset of $P$, the
poset ideal generated by $S$ is the set of all elements $p \in P$
such that $p \leq s$ for some $s \in S$.

\subsection{Isotone maps}
Let $P$ and $Q$ be two partially ordered sets. A map $\phi: Q \pil P$
is {\it isotone} or {\it order preserving}, if $q \leq q^\prime$
implies $\phi(q) \leq \phi(q^\prime)$. The set of isotone maps is denoted
$\Hom(Q,P)$. It is actually again a partially ordered set
with $\phi \leq \psi$ if $\phi(q) \leq \psi(q)$ for all $q \in Q$.
The following will be useful.

\begin{lemma} \label{LPLemFix}
If $P$ is a finite partially ordered set with a unique maximal 
or minimal element, then
an isotone map $\phi : P \pil P$ has a fix point.
\end{lemma}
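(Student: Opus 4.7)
The plan is to start iterating $\phi$ at the distinguished extremum and exploit finiteness. Assume first that $P$ has a unique maximal element $m$. In a finite poset, every element lies below some maximal element, so uniqueness forces $m$ to be the greatest element of $P$, i.e.\ $p \leq m$ for all $p \in P$. In particular $\phi(m) \leq m$.

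Apply $\phi$ repeatedly: since $\phi$ is isotone, the inequality $\phi(m) \leq m$ yields $\phi^2(m) \leq \phi(m)$, and inductively
$$m \geq \phi(m) \geq \phi^2(m) \geq \phi^3(m) \geq \cdots.$$
Because $P$ is finite, this descending chain must stabilize, so there exists $k$ with $\phi^{k+1}(m) = \phi^k(m)$. Then $p := \phi^k(m)$ is the desired fixed point of $\phi$.

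If instead $P$ has a unique minimal element, the same argument applies with inequalities reversed: the unique minimal element $m_0$ is in fact the least element, so $\phi(m_0) \geq m_0$, and iteration produces an ascending chain that stabilizes at a fixed point.

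There is no substantive obstacle here; the only thing to check carefully is the step where uniqueness of a maximal (resp. minimal) element in a finite poset is upgraded to its being the greatest (resp. least) element, which is what makes the first comparison $\phi(m) \leq m$ hold for free.
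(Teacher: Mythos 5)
Your proof is correct and follows essentially the same argument as the paper: iterate $\phi$ starting from the distinguished extremum and use finiteness to obtain stabilization (the paper works with the minimal-element case, you with the maximal-element case first, but these are symmetric). You are also right to flag the implicit step that a unique maximal (resp.\ minimal) element of a finite poset is automatically the greatest (resp.\ least) element; the paper uses this tacitly.
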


\begin{proof}
We show this in case $P$ has a unique minimal  element $p = p_0$.
Then $p_1 = \phi(p_0)$ is $\geq p_0$. If $p_1 > p_0$, let 
$p_2 = \phi(p_1) \geq \phi(p_0) = p_1$. If $p_2 > p_1$ we continue.
Since $P$ is finite, at some stage $p_n = p_{n-1}$ and since
$p_n = \phi(p_{n-1})$, the element $p_{n-1}$ is a fix point.
\end{proof}

\subsection{Alexander duality}
Let $\kr$ be a field. If $R$ is a set, denote by $\kr[x_R]$ the polynomial
ring in the variables $x_r$ where $r$ ranges over $R$. 
If $A$ is a subset of $R$ denote by $m_A$ the monomial $\Pi_{a \in A} x_a$.

Let $I$ be a squarefree monomial ideal in a polynomial ring $\kr[x_R]$,
i.e. its generators are monomials of the type $m_A$.
It corresponds to a simplicial complex $\Delta$ on the vertex set $R$,
consisting of all $S \sus R$, called faces of $\Delta$, 
such that $m_S \not \in I$.

The Alexander dual $J$ of $I$, written $J = I^A$,
 may be defined in different ways.
Three definitions are the following.

\noindent 1. The Alexander dual $J$ is the monomial ideal in $\kr[x_R]$
whose monomials are
those with nontrivial common divisor with 
every monomial in $I$. 

\noindent 2. The Alexander dual $J$ is the ideal generated by all
monomials $m_S$ where the $S$ are complements in $R$ of faces of 
$\Delta$.

\noindent 3. If  $I = \cap_{i = 1}^r \pp_r$ is a decomposition into
prime monomial ideals $\pp_i$ where $\pp_i$ is generated by the
variables $x_a$ as a ranges over the subset $A_i$ of $R$, then
$J$ is the ideal generated by the monomials $m_{A_i}, \, i = 1, \ldots, r$.
(If the decomposition is a minimal primary decomposition, the
$m_{A_i}$ is a minimal generating set of $J$.)

\subsection{Ideals from Hom-posets}
To an isotone map $\phi: Q \pil P$ we associate its graph
$\Gamma \phi \sus Q \times P$ where $\Gamma \phi = 
\{ (q, \phi(q)) \, | \, q \in Q \}$. 
As $\phi$ ranges over $\Hom(Q,P)$, the monomials $m_{\Gamma \phi}$
generate a monomial ideal in $\kr[x_{Q \times P}]$
which we denote by $L(Q,P)$. 
More generally, if $\cS$ is a subset of $\Hom(Q,P)$ we get ideals
$L(\cS)$ generated by $m_{\Gamma \phi}$ where $\phi \in \cS$. 

If $R$ is a subset of the product $Q \times P$, we denote by
$R^\tau$ the subset of $P \times Q$ we get by switching coordinates.
As $L(Q,P)$ is an ideal in $\kr[x_{Q \times P}]$, we may also consider
it as an ideal in $\kr[x_{P \times Q}]$. In cases where we need to be
precise about this, we write it then as $L(Q,P)^\tau$.

\medskip
If $Q$ is the totally ordered poset on $n$
elements $Q = [n] = \{1 < 2 < \cdots < n \}$, we call 
$L([n],P)$, written simply $L(n,P)$, the $n$'th {\it letterplace ideal}
of $P$. It is generated by the monomials
\[ x_{1,p_1} x_{2,p_2} \cdots x_{n,p_n} \mbox{ with } p_1 \leq p_2 \leq
\cdots \leq p_n. \]
This is precisely the same ideal as the multichain ideal
$I_{n,n}(P)$ defined in \cite{EHM} (but with indices switched). 
The ideal $L(P,[n])$, written simply $L(P,n)$, is the $n$'th {\it co-letterplace
ideal} of $P$. In \cite{EHM} it is denoted $H_n(P)$ and is called
a generalized Hibi type ideal. For some background on the
letterplace notion, see Remark \ref{LPRemLett} at the end of this section.

The following is Theorem 1.1(a) in \cite{EHM}, suitably reformulated.
Since it is a very basic fact, we include a proof of it. 

\begin{proposition} \label{LPProAdual}
The ideals $L(n,P)$ and $L(P,n)^\tau$ are Alexander dual in 
$\kr[x_{[n] \times P}]$.
\end{proposition}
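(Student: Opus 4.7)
The plan is to prove the proposition via the prime-decomposition form of Alexander duality (item 3 in the excerpt): identify the minimal primes of $L(n,P)$ and verify that the resulting generating monomials of its dual coincide with the generators of $L(P,n)^\tau$. Since the generators of $L(n,P)$ are the $m_{\Gamma\phi}$ for $\phi\in \Hom([n],P)$, a monomial prime $\pp_A$ (with $A\sus [n]\times P$) contains $L(n,P)$ exactly when $A$ meets every graph $\Gamma\phi$. The core task is therefore to show that the minimal such vertex covers $A$ are precisely the sets $\Gamma\psi^\tau = \{(\psi(p),p) : p\in P\}$ for isotone $\psi\colon P\to [n]$, which are exactly the subsets whose monomials $m_A$ minimally generate $L(P,n)^\tau$.

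For the easy direction, if $\psi\colon P\to[n]$ is isotone then for every isotone $\phi\colon[n]\to P$ the composition $\psi\phi\colon[n]\to[n]$ is isotone, and since $[n]$ has a unique minimum, Lemma \ref{LPLemFix} yields a fixed point $i_0$; this point satisfies $(i_0,\phi(i_0))\in \Gamma\phi\cap\Gamma\psi^\tau$, so $\Gamma\psi^\tau$ is a cover. Minimality follows by removing a single point $(\psi(p_0),p_0)$: the constant map $\phi(i)\equiv p_0$ is then uncovered.

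For the nontrivial direction, given a minimal vertex cover $A$, I would define $\psi(p) = \min\{i : (i,p)\in A\}$, which is well-defined by applying the cover property to the constant isotone map $i\mapsto p$. The main obstacle is showing $\psi$ is isotone, and I would argue by contradiction using the minimality of $A$. Suppose $p<q$ in $P$ with $\psi(p)>\psi(q)=j$; by minimality $A\setminus\{(j,q)\}$ fails to cover, so there is an isotone $\phi\colon[n]\to P$ with $\Gamma\phi\cap A=\{(j,q)\}$, in particular $\phi(j)=q$. Now define a modified map $\phi'$ by $\phi'(i)=p$ for $i\leq j$ and $\phi'(i)=\phi(i)$ for $i>j$; this is isotone since $p<q=\phi(j)\leq\phi(j+1)$, and $\Gamma\phi'\cap A=\emptyset$ because $(i,p)\notin A$ for $i\leq j$ (as $\psi(p)>j$) and $(i,\phi(i))\notin A$ for $i>j$ by choice of $\phi$. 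This contradicts $A$ being a cover. Hence $\psi$ is isotone, $\Gamma\psi^\tau\sus A$, and since $\Gamma\psi^\tau$ is itself a cover, minimality of $A$ forces $A = \Gamma\psi^\tau$. Combining the two directions with item 3 of the Alexander dual description yields the claimed equality.
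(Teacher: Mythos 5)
Your proof is correct, and it takes a genuinely different route in the nontrivial direction. The first direction (every $\Gamma\psi^\tau$ meets every $\Gamma\phi$, via the fixed point of $\psi\circ\phi$, together with the constant-map argument for minimality) is essentially identical to the paper's. But for the converse the paper works with an \emph{arbitrary} cover $F$ and builds the isotone map $\psi\colon P\to[n]$ from a descending filtration $\emptyset=\cJ_0\sus\cJ_1\sus\cdots\sus\cJ_n=P$ of poset ideals (where $\cJ_{i-1}$ is the poset ideal generated by the $p\in\cJ_i$ with $(p,i)\notin F$), then verifies $\Gamma\psi\sus F$. You instead restrict to \emph{minimal} covers $A$, define $\psi(p)=\min\{i:(i,p)\in A\}$ pointwise, and prove isotonicity by a local contradiction: assuming $p<q$ with $\psi(p)>\psi(q)=j$, you use minimality of $A$ to obtain $\phi$ with $\Gamma\phi\cap A=\{(j,q)\}$, then splice $\phi'(i)=p$ for $i\le j$ with $\phi'(i)=\phi(i)$ for $i>j$ to get an isotone map whose graph misses $A$. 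This works (note that $\psi(p)>j$ automatically forces $j<n$, so there is no degenerate case), and it fits cleanly with the prime-decomposition formulation of Alexander duality since minimal covers have the same cardinality $|P|$ and thus cannot nest. What you give up is the paper's stronger conclusion that \emph{every} cover contains some $\Gamma\psi^\tau$; what you gain is a shorter, more pointwise argument that avoids introducing the auxiliary filtration. The paper's filtration viewpoint is not wasted, though: the correspondence between isotone maps $P\to[n]$ and chains of poset ideals in $D(P)$ reappears in Section~\ref{FacetSec}, so the paper's proof is also laying thematic groundwork.
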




\begin{proof}
Let $L(n,P)^A$ be the Alexander dual of $L(n,P)$. 
First we show $L(P,n) \sus L(n,P)^A$.
This is equivalent to: For any $\phi \in \Hom([n],P)$ and any
$\psi \in \Hom(P,[n])$, the graphs $\Ga \phi$ and $\Ga \psi^\tau$ intersect
in $[n] \times P$. 
Let $i$ be a fix point for $\psi \circ \phi$. 
Then $i \mto{\phi} p \mto{\psi}
i$ and so $(i,p)$ is in both $\Ga \phi$ and $\Ga \psi^\tau$. 

Secondly, given a squarefree monomial $m$ in $L(n,P)^A$
we show that it is divisible by a monomial in $L(P,n)$. 
This will show that $L(n,P)^A \sus L(P,n)$ and force
equality here.
So let the monomial $m$ correspond to the subset $F$ of $P \times [n]$. 
It intersects all graphs $\Ga \phi^\tau$ where  $\phi \in \Hom([n],P)$. 
We must show it contains a graph $\Ga \psi$ where $\psi \in \Hom(P, [n])$. 
Given $F$, let $\cJ_n = P$ and let $\cJ_{n-1}$ be the poset ideal of $P$
generated by all $p \in \cJ_n = P$ such that $(p,n) \not \in F$. 
Inductively let $\cJ_{i-1}$ be the poset ideal in $\cJ_i$ generated by
all $p$ in $\cJ_i$ with $(p,i)$ not in $F$. 

\begin{claim} $\cJ_0 = \emptyset.$
\end{claim}

\begin{proof} Otherwise let $p \in \cJ_0$. Then there is $p \leq p_1$
with $p_1 \in \cJ_1$ and $(p_1,1) \not \in F$. Since $p_1 \in \cJ_1$ there is
$p_1 \leq p_2$ with $p_2 \in \cJ_2$ such that $(p_2,2) \not \in F$. 
We may continue this and get a chain $p_1 \leq p_2 \leq \cdots \leq p_n$
with $(p_i,i)$ not in $F$. 
But this contradicts $F$ intersecting all graphs $\Ga \phi$ where
$\phi \in \Hom([n],P)$.
\end{proof}

We thus get a filtration of poset ideals
\[  \emptyset = \cJ_0 \sus \cJ_1 \sus \cdots \sus \cJ_{n-1} \sus 
\cJ_n = P. \]
This filtration corresponds to an isotone map $\psi : P \pil [n]$. 

\begin{claim} $\Ga \psi$ is a subset of $F$.
\end{claim}

\noindent {\it Proof.} Let $(p,i) \in \Ga \psi$. Then  $p \in \cJ_i \backslash
\cJ_{i-1}$ and so $p \not \in \cJ_{i-1}$. Thus $(p,i) \in F$. 

\end{proof}

\begin{remark}
The case $n =2$ was shown in \cite{HeHi2} where
the ideal $H_P$ generated by $\prod_{p \in J} x_p \prod_{q \in P \backslash J}
y_q$ as $J$ varies over the poset ideals in $P$, 
was shown to be Alexander dual to the ideal 
generated by $x_p y_q$ where $p \leq q$. In \cite{HeHi2} it is
also shown that the $L(2,P)$ are precisely the edge ideals of 
bipartite Cohen-Macaulay graphs.
\end{remark}

\begin{remark}
That $L(m,n)$ and $L(n,m)$ are Alexander dual is Proposition 4.5
of \cite{FlVa}. There the elements of these ideals 
are interpreted as paths in a $m \times n$ matrix with generic linear forms
$(x_{ij})$ and the generators of the ideals are the products of 
the variables in these paths.
\end{remark}

\subsection{Alexander dual of $L(Q,P)$}
In general $L(Q,P)$ and $L(P,Q)$ are not Alexander dual.
This is easily checked if for instance $Q$ and $P$ are antichains
of sizes $\geq 2$. However we have the following.

\begin{proposition}
Suppose $Q$ has a unique maximal or minimal element.
The least degree of a generator of the Alexander dual $L(Q,P)^A$ 
and of $L(P,Q)$ are both $d = |P|$ and the degree $d$ parts of these 
ideals are equal. In particular, since $L(P,Q)$ is generated in this 
degree $d$, it is contained in $L(Q,P)^A$. 
\end{proposition}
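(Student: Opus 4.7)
\medskip

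\noindent\textbf{Proof proposal.} My plan is to bootstrap off Proposition~\ref{LPProAdual}, replacing the fix-point argument for maps $[n] \pil [n]$ with Lemma~\ref{LPLemFix} applied to isotone maps $Q \pil Q$, which is exactly what the hypothesis on $Q$ makes available. The statement for $L(P,Q)$ is immediate: each generator $m_{\Ga \psi}$ with $\psi \in \Hom(P,Q)$ has degree $|P| = d$. All remaining claims concern $L(Q,P)^A$ alongside $L(P,Q)^\tau$ in $\kr[x_{Q \times P}]$, and I would split the argument into three steps.

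First, I would show that for every $\psi \in \Hom(P,Q)$, the monomial $m_{\Ga \psi^\tau}$ lies in $L(Q,P)^A$. By the first definition of Alexander dual this reduces to $\Ga \psi^\tau \cap \Ga \phi \neq \emptyset$ for every $\phi \in \Hom(Q,P)$; a pair $(q,p)$ in this intersection is precisely a fix point $q$ of the isotone map $\psi \circ \phi : Q \pil Q$, which exists by Lemma~\ref{LPLemFix}. Second, I would show every squarefree monomial $m_F \in L(Q,P)^A$ satisfies $|F| \geq d$: applied to the constant maps $\phi \equiv p$, the defining property of $L(Q,P)^A$ forces $F$ to meet $Q \times \{p\}$ for every $p \in P$, so the projection $F \pil P$ is surjective. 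Together these steps imply that the least degree of a generator of $L(Q,P)^A$ is exactly $d$ and deliver the inclusion $L(P,Q)^\tau_d \sus L(Q,P)^A_d$.

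The third and main step is the reverse inclusion. If $m_F$ is a degree-$d$ generator of $L(Q,P)^A$, then $|F| = d$ and the projection $F \pil P$ is bijective, giving $F = \{(\psi(p), p) : p \in P\}$ for a unique set-theoretic map $\psi : P \pil Q$. The obstacle is to force $\psi$ to be isotone. Assume for contradiction that $p < p'$ in $P$ while $q := \psi(p) \not\leq \psi(p') =: q'$. I would exhibit the explicit isotone map $\phi : Q \pil P$ defined by $\phi(x) = p'$ if $x \geq q$ and $\phi(x) = p$ otherwise; isotonicity follows because $\{x \in Q : x \geq q\}$ is an up-set and $p < p'$. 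A direct check then shows $\psi \circ \phi$ has no fix point in $Q$: a fix point $x \geq q$ would force $x = q'$ and hence $q' \geq q$ against the assumption, while $x \not\geq q$ would force $x = q$, which is absurd. This contradicts $\Ga \phi \cap F \neq \emptyset$, so $\psi$ must be isotone and $m_F = m_{\Ga \psi^\tau}$ lies in $L(P,Q)^\tau$, completing the argument.
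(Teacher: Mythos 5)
Your proof is correct and is essentially the same argument as the paper's. The paper reformulates the claim in terms of minimal primes of $L(Q,P)$ (using the third characterization of Alexander duality) and shows (1) each $\pp_\psi$ contains $L(Q,P)$ via Lemma~\ref{LPLemFix}, (2) each $\pp_\psi$ is minimal via the constant maps $\phi\equiv p$, and (3) every height-$|P|$ minimal prime is some $\pp_\psi$ by constructing a two-valued isotone $\phi:Q\to P$ to rule out non-isotone $\psi$. You run the identical three steps, just phrased with the first characterization of Alexander duality (nonempty intersection of $F$ with every $\Gamma\phi$) instead of the language of primes; your step~2 shows the projection $F\to P$ is surjective, which is the same content as the paper's minimality-of-$\pp_\psi$ argument. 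The only genuine micro-difference is in step~3: the paper defines $\phi$ by sending the down-set of $\psi(p')$ to $p$ and its complement to $p'$, while you send the up-set of $\psi(p)$ to $p'$ and its complement to $p$. These are mirror-image constructions and both yield a $\phi$ with $\Gamma\phi\cap F=\emptyset$; neither has any advantage over the other.
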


Note that the above is equivalent to say that the minimal primes
of $L(Q,P)$ of height $\leq |P|$ are precisely the
\[ \pp_\psi=(\{x_{\psi(p),p}\:\; p\in P\}), \quad \mbox{where } 
\psi \in \Hom(P,Q) .\] 

\begin{proof}
We show that:
\begin{itemize}
\item[1.] $L(Q,P) \subset \pp_\psi$ for all $\psi\in \Hom(P,Q)$.
\item[2.] $\pp_\psi$ is a minimal prime of $L(Q,P)$.
\item[3.] Any minimal prime $\pp$ of $L(Q,P)$ is $\pp = \pp_\psi$
for some $\psi$. 
\end{itemize}
This will prove the proposition.

\noindent 1. Given $\phi\in \Hom(Q,P)$ and $\psi\in \Hom(P,Q)$. 
We have to show that 
$m_\phi=\prod_{q\in Q}x_{q,\phi(q)}\in \pp_\psi$. 
By Lemma \ref{LPLemFix} $\psi \circ \phi$ has a fix point $q$,
and let 
$p=\phi(q)$. Then $\psi(p)=q$. Therefore,  $x_{q,p}$ is a factor of 
$m_\phi$ and a generator of $\pp_\psi$. This implies that $m_\phi\in\pp_\psi$.

\noindent 2. Next we show that $\pp_\psi$ is a minimal prime ideal of $L(Q,P)$. 
Suppose this is not the case.  Then we may skip one of its generators, 
say $x_{\psi(p), p}$,  to obtain the prime ideal $\pp\subset \pp_\psi$ with 
$L(Q,P)\subset \pp$. Let $\phi\in \Hom(Q,P)$ be the constant isotone map with 
$\phi(q)=p$ for all $q\in Q$. Then $m_{\phi}=\prod_{q\in  Q}x_{q,p}\in L(Q,P)$.
  Since no factor of $m_\phi$ is divisible by a generator of $\pp$, 
it follows that $L(Q,P)$ is not contained in $\pp$, a contradiction.

\noindent 3. Now let $\pp$ be any minimal prime ideal of $L(Q,P)$.  
Since $L(Q,P)\subset \pp$ it follows as in the previous paragraph that for
 each $p\in P$ there exists an element $\psi(p)\in Q$ such that 
$x_{\psi(p),p}\in \pp$. This show that $\height L(Q,P)=|P|$. Assume now that 
$\height \pp=|P|$. Then $\pp=(\{x_{\psi(p),p}\:\; p\in P\})$. It remains to be
 shown that $\psi\: P\to Q$ is isotone. Suppose this is not the case. Then 
there exist $p, p'\in P$ such that $p<p'$ and $\psi(p)\not\leq \psi(p')$.
 Let $\phi : Q\to P$ the map with $\phi(q)=p$ if $q\leq \psi(p')$ and 
$\phi(q)=p'$ if $q\not\leq \psi(p')$. Then $\phi$ is isotone, and it 
follow that $m_\phi= \prod_{q\leq \psi(p')}x_{q,p}\prod_{q\not\leq \psi(p')}x_{q,p'}$ 
does not belong to $\pp$, a contradiction.
\end{proof}

\begin{remark} In \cite{HeQuSh} they determine precisely when
$L(P,Q)$ and $L(Q,P)$ are Alexander dual, for 
finite posets $P$ and $Q$.
\end{remark}

\begin{remark} \label{LPRemLett}
Let $X = \{ x_1, \ldots, x_n \}$ be an alphabet. The letterplace
correspondence is a way to encode non-commutative monomials
$x_{i_1}x_{i_2}\cdots x_{i_r}$ in $\kr\langle X \rangle$ by commutative 
polynomials $x_{i_1,1} \cdots x_{i_r,r}$ in $\kr[X \times \NN]$. 
It is due to G.-C. Rota who again attributed it to a physicist, apparently
Feynman. D. Buchsbaum has a survey article \cite{Bu} on letterplace
algebra, and the use of these technique in the resolution of Weyl modules.
Recently \cite{ScaLev} use letterplace ideals in computations of 
non-commutative Gr\"obner bases.
\end{remark}

\section{Quotients of letterplace ideals} \label{QLPSec}

A chain $c$ in the product of two posets $Q \times P$ is said to be 
{\it left strict} if for two elements in the chain,
$(q,p) < (q^\prime, p^\prime)$ implies $q < q^\prime$.
Analogously we define right strict. The chain is {\it bistrict}
if it is both left and right strict.

An isotone map of posets $\phi: Q \times P \pil R$ is said to have  
{\it  left strict chain fibers} if 
all its fibers $\phi^{-1}(r)$ are left strict chains in $Q \times P^\op$.
Here $P^\op$ is the opposite poset of $P$, i.e. $p \leq^\op p^\prime$ 
in $P^{op}$ iff $p^\prime \leq p$ in $P$.  

The map $\phi$ gives a map of linear spaces $\phi_1 : \langle x_{Q \times P} 
\rangle \pil \langle x_R \rangle$ (the brackets here mean the
$\kr$-vector space spanned by the set of variables). 
The map $\phi_1$ induces a map of 
polynomial rings $\hat{\phi} : \kr[x_{Q \times P}] \pil \kr[x_R]$. 
In the following $B$ denotes  a basis for the kernel of the map
of degree one forms $\phi_1$, 
consisting of differences $x_{q,p} - x_{q^\prime,p^\prime}$ with 
$\phi_1(q,p) = \phi_1(q^\prime,p^\prime)$. 

\begin{theorem} \label{QLPThmNPR} Given an isotone map
$\phi: [n] \times P \pil R$ which has left strict chain fibers.
Then the basis $B$ is a regular sequence of the ring
$\kr[x_{[n] \times P}]/L(n,P)$. 
\end{theorem}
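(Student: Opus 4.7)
The plan is to reduce the regularity of $B$ to a single height computation, exploiting the fact that $\kr[x_{[n] \times P}]/L(n,P)$ is Cohen--Macaulay. After identifying this ring with the Stanley--Reisner ring $\kr[\Delta]$ of the simplicial complex $\Delta$ whose minimal non-faces are the generators of $L(n,P)$, Cohen--Macaulayness \cite[Thm.~2.4]{EHM} together with the height formula $\height L(n,P)=|P|$ from the preceding minimal-prime proposition give $\dim \kr[\Delta] = (n-1)|P|$. On a graded Cohen--Macaulay $\kr$-algebra, a sequence of homogeneous linear forms is a regular sequence if and only if it is part of a homogeneous system of parameters, i.e.\ if and only if it cuts the Krull dimension by exactly its length.

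Let $\bar L := \hat\phi(L(n,P))$, so that $\hat\phi$ induces an isomorphism $\kr[\Delta]/(B)\cong \kr[x_R]/\bar L$. Since $|B|=n|P|-|R|$, the regularity of $B$ is thus equivalent to the single equality $\height \bar L = |P|$. The upper bound $\height \bar L \leq |P|$ is automatic from Krull's principal ideal theorem applied to the $|B|$ linear forms. The real work is the matching lower bound $\height \bar L \geq |P|$, which unravels to the combinatorial assertion: for every $I\subseteq R$ with $|I|<|P|$, there exists an isotone $\rho\in \Hom([n],P)$ whose graph $\Gamma\rho$ is disjoint from $\phi^{-1}(I)$.

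The main obstacle is this combinatorial claim, and here the left strict chain fiber hypothesis enters decisively. Each fiber $\phi^{-1}(r)$ is a left strict chain in $[n]\times P^\op$, hence strictly increasing in the $[n]$-coordinate and weakly decreasing in $P$; a graph $\Gamma\rho$ is on the contrary strictly increasing in $[n]$ and weakly increasing in $P$, so each fiber meets $\Gamma\rho$ in at most one point. I would prove the claim by induction on $|I|$: given a chain $\rho'$ avoiding $|I|-1$ of the fibers, if $\rho'$ hits the remaining fiber $\phi^{-1}(r)$ in its unique intersection point, I would locally perturb $\rho'$ at that coordinate to a new chain that still avoids the old fibers and now also avoids $\phi^{-1}(r)$. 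Showing that such a perturbation always exists when $|I|<|P|$ is the combinatorial heart of the argument and amounts to a Hall-type matching that exploits the opposing monotonicities in $P$; I expect this to be the substance of Section~\ref{RegSec}. Once $\height \bar L = |P|$ is established, the Cohen--Macaulayness of $\kr[\Delta]$ promotes \emph{part of a system of parameters} to \emph{regular sequence}, finishing the proof.
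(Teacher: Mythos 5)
Your overall strategy is genuinely different from the paper's. The paper proves the theorem directly: it factors $\phi$ through an intermediate poset $R'$ splitting one fiber at a time, and shows by an explicit monomial argument (using Lemma \ref{PrAdLemNulldiv} and the left strict chain fiber condition) that each single variable difference $x_{r_1}-x_{r_2}$ is a nonzerodivisor; Cohen--Macaulayness of $L(n,P)$ is then \emph{derived} as a corollary. You instead \emph{assume} Cohen--Macaulayness (citing \cite[Thm.~2.4]{EHM}, which is legitimate as an external input, though it would make the paper's subsequent corollary circular) and reduce regularity of $B$ to the single equality $\height \bar L=|P|$, i.e.\ to the combinatorial claim that for every $I\subseteq R$ with $|I|<|P|$ some graph $\Gamma\rho$, $\rho\in\Hom([n],P)$, avoids $\phi^{-1}(I)$. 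That reduction is correct, and the claim is in fact provable from results already in the paper: if every $\Gamma\rho$ meets $\phi^{-1}(I)$, then by Proposition \ref{LPProAdual} the set $\phi^{-1}(I)$ contains $\Gamma\psi^\tau$ for some $\psi\in\Hom(P,[n])$, and left strictness of the fibers forces $\phi$ to be injective on $\Gamma\psi^\tau$ (two points $(\psi(p),p)$, $(\psi(q),q)$ in one fiber would give $\psi(p)<\psi(q)$ and $q\le p$, contradicting isotonicity of $\psi$), whence $|I|\ge|P|$.

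However, as written your proof has a genuine gap precisely at this combinatorial heart. You do not prove the avoidance claim; you only sketch an induction on $|I|$ resting on a "local perturbation" whose existence you explicitly defer ("I expect this to be the substance of Section~\ref{RegSec}"), and you give no actual Hall-type argument. Worse, the one concrete observation you offer in support is false: a left strict chain fiber need \emph{not} meet $\Gamma\rho$ in at most one point. For example $\phi^{-1}(r)=\{(1,p),(2,p)\}$ is a left strict chain in $[n]\times P^{\op}$, and the constant map $\rho\equiv p$ has $\Gamma\rho$ containing both points; in general the intersection is only constrained to lie in a single $P$-level $\{(i,p_0)\}$. So the induction step as described (perturb at "the" unique coordinate of intersection) does not even start correctly, and the existence of a perturbation avoiding all previously avoided fibers is exactly the nontrivial content that remains unproved. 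To repair the argument, replace the perturbation sketch by the Alexander-duality argument above (or by the paper's direct induction on $|\im\phi|$).
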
 



\begin{theorem} \label{QLPThmPNR} Given an isotone map
$\psi: P \times [n] \pil R$ which has left strict chain fibers.
Then the basis $B$ is a regular sequence of the ring
$\kr[x_{P \times [n]}]/L(P,n)$. 
\end{theorem}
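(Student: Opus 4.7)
A natural first attempt would be to reduce Theorem \ref{QLPThmPNR} to Theorem \ref{QLPThmNPR} via the Alexander duality of Proposition \ref{LPProAdual}. This route fails, however: transposing $\psi\colon P\times[n]\to R$ to $\psi^\tau\colon[n]\times P\to R$ converts left strict chain fibers in $P\times[n]^\op$ into \emph{right} strict chain fibers in $[n]\times P^\op$, and Theorem \ref{QLPThmNPR} does not apply. A separate direct argument is therefore needed, whose overall shape parallels the proof of Theorem \ref{QLPThmNPR}.

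My plan is to use the Hilbert-series criterion for regularity of a sequence of linear forms: letting $A=\kr[x_{P\times[n]}]/L(P,n)$, the sequence $B$ is regular on $A$ precisely when
\[
H_{A/(B)}(t)\;=\;(1-t)^{|B|}\,H_A(t).
\]
Since $A/(B)\cong \kr[x_R]/\hat\psi(L(P,n))$, this reduces to a combinatorial comparison. The right-hand side is controlled by the face vector of the Stanley--Reisner complex $\Delta$ of $L(P,n)$, whose faces are subsets of $P\times[n]$ containing no graph $\Gamma\phi$ for $\phi\in\Hom(P,[n])$. The left-hand side is the Hilbert series of a monomial quotient; that quotient is in general not a Stanley--Reisner ring, because a generator $\prod_p x_{p,\phi(p)}$ of $L(P,n)$ may contain two factors whose coordinates lie in the same fiber of $\psi$ when $\phi$ is constant on the $P$-components of that fiber. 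The role of the left strict chain fiber hypothesis is precisely to control this failure of squarefreeness by forbidding identification of variables with equal $P$-component.

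The technical heart of the proof is then a face-counting comparison, fiber by fiber, showing that identifying variables along $\psi$ reduces the face vector of $\Delta$ in exactly the way dictated by the factor $(1-t)^{|B|}$. I would proceed by induction on $|B|$, establishing at each step that the next variable difference $x_a-x_b$ (with $a,b$ in a common fiber of $\psi$) is a non-zero-divisor on the current quotient, by verifying that it lies in no associated prime. The main obstacle is this inductive step: the associated primes of the successive quotient need not correspond to facets of a clean subcomplex of $\Delta$, so handling them requires a matching-style argument that exploits both the chain structure of each fiber of $\psi$ and the isotone character of $\psi$, in direct analogy with the proof of Theorem \ref{QLPThmNPR}.
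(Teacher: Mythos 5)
Your opening observation is good and matches the paper's own remark: transposing converts left strict to right strict chain fibers, so Theorem \ref{QLPThmNPR} cannot be applied to $\psi^\tau$, and a separate argument is needed. Beyond that, however, the proposal is an outline rather than a proof, and the outline is not carried through in either of the two directions you sketch. The Hilbert-series criterion $H_{A/(B)}(t)=(1-t)^{|B|}H_A(t)$ is correct, but you never do the ``face-counting comparison, fiber by fiber''; you immediately switch to a second plan, induction on $|B|$ and verifying that each $x_a-x_b$ avoids every associated prime. That criterion is also correct, but again the verification is not done; you defer it to ``a matching-style argument \dots in direct analogy with the proof of Theorem \ref{QLPThmNPR}.''

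That last phrase is where the proposal most seriously diverges from reality. The paper explicitly warns that Theorems \ref{QLPThmNPR} and \ref{QLPThmPNR} require distinct proofs, with \ref{QLPThmPNR} the more delicate, and the actual proof confirms this. The proof of \ref{QLPThmNPR} needs only to locate a position $r$ between $s$ and $t$ where two generators can be spliced into a new generator dividing $m$. The proof of \ref{QLPThmPNR} (carried out simultaneously with \ref{CLPThmPNRJ}) instead has to build an entirely new isotone map $\ell\colon P\to[n]$ by taking $\ell(p)=\min(i_p,j_p)$ on one region of $P$ and $\ell(p)=i_p$ on another, decompose the monomials $m^i$ and $m^j$ into four pieces each according to whether $i_p>j_p$, $i_p<j_p$, $i_p=j_p$, or $p>s$, and then prove three separate coprimality and exponent claims to show that the monomial $m^\ell$ divides $m$. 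None of this is a ``direct analogy'' to \ref{QLPThmNPR}; it is the genuinely new content of the theorem, and it is absent from your proposal. Moreover, the device the paper uses to reduce ``$x_{r_1}-x_{r_2}$ is a non-zero-divisor'' to a purely monomial statement, namely Lemma \ref{PrAdLemNulldiv} (if $(x_1-x_0)f=0$ in $S/I$ for a squarefree monomial $I$, then $x_0m=x_1m=0$ for every monomial $m$ of $f$), is a necessary step that your associated-prime framing would also have to reproduce in some form, since the intermediate quotients are not Stanley--Reisner rings, as you correctly note; the paper handles this via the multigraded structure in that lemma rather than by classifying associated primes. In short: the reduction to a single variable difference is right, but the actual argument that such a difference is a non-zero-divisor, which is the entire point, is missing, and the claim that it mirrors \ref{QLPThmNPR} is incorrect.
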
 

We shall prove these in Section \ref{RegSec}.
For now we note that they require distinct proofs, with
the proof of Theorem \ref{QLPThmPNR} the most delicate.



In the setting of Theorem \ref{QLPThmNPR}, 
we let $L^\phi(n,P)$ be the ideal generated by 
the image of the $n$'th letterplace ideal $L(n,P)$ in $\kr[x_R]$,
and in the setting of Theorem \ref{QLPThmPNR},
we let $L^\psi(P,n)$ be the ideal generated by 
the image of the $n$'th co-letterplace ideal $L(P,n)$ in $\kr[x_R]$,
Note that $L^\phi(n,P)$ is a squarefree ideal iff in the above
the fibers $\phi^{-1}(r)$ are bistrict chains in $[n] \times P^{op}$, 
and similarly
$L^\psi(P,n)$ is a squarefree ideal iff the fibers 
$\psi^{-1}(r)$ are bistrict chains in $P \times [n]^{op}$.

We get the following consequence of the above Theorems \ref{QLPThmNPR}
and \ref{QLPThmPNR}.

\begin{corollary}
The quotient rings $\kr[x_{[n] \times P}]/L(n,P)$ and $\kr[x_R]/L^\phi(n,P)$
have the same graded Betti numbers. Similarly for 
$\kr[x_{P \times [n]}]/L(P,n)$ and $\kr[x_R]/L^\psi(P,n)$.
\end{corollary}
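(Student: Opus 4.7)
The plan is to deduce both statements from Theorems~\ref{QLPThmNPR} and~\ref{QLPThmPNR} together with the standard fact that killing a linear non-zero-divisor preserves graded Betti numbers over the corresponding quotient ring. I treat only the letterplace case; the co-letterplace case is entirely analogous, using Theorem~\ref{QLPThmPNR} in place of Theorem~\ref{QLPThmNPR}.

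Set $S = \kr[x_{[n] \times P}]$, $I = L(n,P)$, and $M = S/I$. By Theorem~\ref{QLPThmNPR}, the elements of $B$ form a regular sequence on $M$ consisting of linear forms of $S$. I will invoke the following standard lemma: if $y \in S_1$ is a non-zero-divisor on a finitely generated graded $S$-module $N$, then
\[
\beta^S_{i,j}(N) \;=\; \beta^{S/(y)}_{i,j}(N/yN)\qquad\text{for all }i,j.
\]
The proof is a single line: take a minimal graded free resolution $F_\bullet \to N$ over $S$; regularity of $y$ on $N$ together with the Koszul resolution $0 \to S(-1) \to S \to S/(y) \to 0$ forces $\Tor^S_i(N, S/(y)) = 0$ for $i \geq 1$, so $F_\bullet \otimes_S S/(y)$ is a free resolution of $N/yN$ over $S/(y)$, and it is still minimal because the differentials have entries in the maximal ideal of $S/(y)$.

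Iterating this lemma along the regular sequence $B = (b_1, \ldots, b_m)$ of linear forms on $M$ yields
\[
\beta^S_{i,j}(S/I) \;=\; \beta^{S/(B)}_{i,j}\bigl(S/(I+(B))\bigr).
\]
By construction the induced ring map $\hatphi : S \to \kr[x_R]$ has kernel $(B)$, hence descends to an isomorphism $S/(B) \isopil \kr[x_R]$. Under this isomorphism the image of $I = L(n,P)$ is precisely $L^\phi(n,P)$ by the definition of the latter, so $S/(I+(B)) \iso \kr[x_R]/L^\phi(n,P)$, giving the desired equality of graded Betti numbers. Replacing $L(n,P)$ by $L(P,n)$ and invoking Theorem~\ref{QLPThmPNR} in place of Theorem~\ref{QLPThmNPR} handles the co-letterplace statement verbatim.

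There is no real obstacle here once the two regularity theorems are granted: the corollary is a purely formal consequence of the base-change-by-linear-regular-sequence principle together with the identification $S/(B) \iso \kr[x_R]$. If anything needs care, it is only the bookkeeping verification that the image of $I$ in $\kr[x_R]$ really is the ideal $L^\phi(n,P)$ as defined immediately before the corollary, which is built into the definition.
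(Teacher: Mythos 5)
Your overall strategy is exactly the paper's: iterate the standard lemma that a linear non-zero-divisor preserves graded Betti numbers along the regular sequence $B$, using Theorems~\ref{QLPThmNPR} and~\ref{QLPThmPNR}. However, there is one concrete error in the bookkeeping. You assert that $\hatphi : S \to \kr[x_R]$ has kernel $(B)$ and therefore ``descends to an isomorphism $S/(B) \isopil \kr[x_R]$.'' The first part is right, but the conclusion is wrong in general: nothing in the hypotheses forces $\phi$ to be surjective, so the image of $\hatphi$ is $\kr[x_{\im\phi}]$, not all of $\kr[x_R]$. What you actually get from killing $B$ is an isomorphism $S/(B) \iso \kr[x_{\im\phi}]$, carrying $I+(B)$ to the ideal $L^{\im\phi}(n,P)$ generated by the images of the generators in $\kr[x_{\im\phi}]$. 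The paper closes the gap by one more observation: writing $T = R \setminus \im\phi$, one has $\kr[x_R]/L^\phi(n,P) \iso \bigl(\kr[x_{\im\phi}]/L^{\im\phi}(n,P)\bigr) \te_\kr \kr[x_T]$, and tensoring with a polynomial ring over $\kr$ leaves graded Betti numbers unchanged. You should add this extra tensor-with-free-variables step (or, alternatively, restrict attention to $\im\phi$ from the outset). With that repair your argument coincides with the paper's.
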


\begin{proof} We prove the first statement. 
Let $L^{\im \phi}(n,P)$ be the image of 
$L(n,P)$ in $\kr[x_{\im \phi}]$, and $S = R\backslash \im \phi$. 
Thus $\kr[x_{\im \phi}]/L^{\im \phi}(n,P)$ is a quotient of 
$\kr[x_{[n] \times P}]/L(n,P)$ by a regular sequence, and 
$\kr[x_R] / L^{\phi}(n,P)$ is 
$\kr[x_{\im \phi}]/L^{\im \phi}(n,P)  \te_\kr \kr[x_S]$.
\end{proof}



For the poset $P$ consider the multichain ideal $I(n,P)$ 
in $\kr[x_P]$ generated by monomials 
$x_{p_1} x_{p_2} \cdots x_{p_n}$ where $p_1 \leq p_2 \leq \cdots \leq p_n$
is a multichain of length $n$ in $P$. The quotient 
$\kr[x_P]/I(n,P)$ is clearly artinian since $x_p^n$ is in 
$I(n,P)$ for every $p \in P$. 

\begin{corollary} The ring $\kr[x_P]/I(n,P)$ is an artinian 
reduction of $\kr[x_{[n] \times P}]/L(n,P)$ by a regular sequence.
In particular $L(n,P)$ is a Cohen-Macaulay ideal.
It is Gorenstein iff $P$ is an antichain.
\end{corollary}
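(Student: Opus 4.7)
The approach is to apply Theorem \ref{QLPThmNPR} to the second-coordinate projection $\phi\colon [n]\times P \pil P$, $(i,p)\mapsto p$. One checks readily that $\phi$ is isotone, and that each fiber $\phi^{-1}(p) = \{(1,p),\ldots,(n,p)\}$ is a chain in $[n]\times P^{\op}$ (the second coordinate is constant while the first is totally ordered); moreover this chain is left strict since distinct elements have distinct first coordinates. Hence Theorem \ref{QLPThmNPR} applies and the basis $B$ of kernel differences $x_{i,p}-x_{j,p}$ is a regular sequence on $\kr[x_{[n]\times P}]/L(n,P)$.

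Next I would identify the resulting quotient. The induced ring map $\hat\phi\colon \kr[x_{[n]\times P}] \pil \kr[x_P]$ is the substitution $x_{i,p}\mapsto x_p$. Under this, a generator $x_{1,p_1}\cdots x_{n,p_n}$ of $L(n,P)$ (indexed by a multichain $p_1\leq\cdots\leq p_n$) maps to the multichain product $x_{p_1}\cdots x_{p_n}$ generating $I(n,P)$, so $L^{\phi}(n,P) = I(n,P)$. The preceding Corollary then gives that $\kr[x_P]/I(n,P)$ and $\kr[x_{[n]\times P}]/L(n,P)$ have identical graded Betti numbers. Since the former is artinian (as $x_p^n\in I(n,P)$ for every $p$), it is Cohen-Macaulay of dimension zero, and the equality of Betti numbers forces $\kr[x_{[n]\times P}]/L(n,P)$ to be Cohen-Macaulay as well.

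For the Gorenstein equivalence, since reduction by a regular sequence in a Cohen-Macaulay ring both preserves and reflects Gorensteinness, it suffices to determine when the artinian ring $A = \kr[x_P]/I(n,P)$ is Gorenstein. If $P$ is an antichain, then multichains of length $n$ in $P$ are constant, so $I(n,P) = (x_p^n : p\in P)$ is generated by a regular sequence and $A$ is a complete intersection, hence Gorenstein. For the converse, I would assume $P$ contains a comparable pair $p<q$ and aim to produce two linearly independent socle elements of $A$, thereby ruling out Gorensteinness. The hard part will be the combinatorial socle analysis in the presence of arbitrary additional elements of $P$; one workable route is via the palindromy criterion for the $h$-vector, observing that restricting $I(n,P)$ to the two-variable subring $\kr[x_p,x_q]$ yields the non-Gorenstein ideal $(x_p,x_q)^n$ (whose quotient has highest nonzero degree $n-1$ of dimension $n\geq 2$), and tracking the contributions of the remaining variables to show that the Hilbert series of $A$ cannot be palindromic.
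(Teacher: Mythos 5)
Your treatment of the artinian reduction and of Cohen--Macaulayness is correct and matches the paper's approach: verifying that the projection $[n]\times P \to P$ is isotone with left strict chain fibers (and your check of this is fine), invoking Theorem~\ref{QLPThmNPR}, and identifying the image as $I(n,P)$. The paper's CM argument is slightly more direct (a ring that becomes artinian after modding out a regular sequence is automatically CM), but your route through the Betti-number corollary is also valid.

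The Gorenstein converse, however, has a genuine gap. You observe that the antichain case gives a complete intersection, but for the other direction you only sketch a plan -- restricting to $\kr[x_p,x_q]$, tracking the socle, or testing palindromy of the $h$-vector -- and you yourself flag this as ``the hard part'' left undone. The restriction to a two-variable subring is not by itself enough: you would need to show that adding back the remaining variables cannot restore symmetry, and that requires real work. The paper bypasses all of this with the standard fact that an artinian \emph{monomial} ideal is Gorenstein if and only if it is a complete intersection (equivalently, generated by pure powers $x_p^{a_p}$; equivalently, the socle of the quotient is spanned by a single monomial). Given this, the argument is immediate: the pure powers $x_p^n$ already lie in $I(n,P)$, so $I(n,P)$ is a complete intersection precisely when it has no further generators, which happens precisely when there are no length-$n$ multichains other than the constant ones, i.e.\ when $P$ is an antichain. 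Your proposal is missing this key lemma, and without it the converse is not established.
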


\begin{proof} The first part is because the 
map $[n] \times P \pil P$ fulfills the criteria
of Theorem \ref{QLPThmNPR} above. 
An artinian ideal is Gorenstein iff it is a complete intersection.
Since all $x_p^n$ are in $I(n,P)$, this holds iff there are no more
generators of $I(n,P)$, which means precisely that
$P$ is an antichain.
\end{proof}

This recovers part of Theorem 2.4 of \cite{EHM} showing that $L(n,P)$
is Cohen-Macaulay. The Gorenstein case above is Corollary 2.5 of 
\cite{EHM}.

\begin{remark}
The multigraded Betti numbers of the resolution of $L(n,P)$ is
described in \cite{DaFlNeLP}, as well as other properties
of this resolution.
\end{remark}

Recall that a squarefree monomial ideal is bi-Cohen-Macaulay, \cite{FlVa}, iff
both the ideal and its Alexander dual are Cohen-Macaulay ideals.

\begin{corollary} $L(n,P)$ is bi-Cohen-Macaulay iff $P$ is totally ordered.
\end{corollary}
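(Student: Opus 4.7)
The previous corollary shows $L(n,P)$ is always Cohen--Macaulay, and Proposition \ref{LPProAdual} identifies its Alexander dual with $L(P,n)^\tau$. So bi-Cohen--Macaulayness of $L(n,P)$ reduces to asking when $L(P,n)$ itself is Cohen--Macaulay. The plan is to prove this happens exactly for $P$ totally ordered, with the implicit assumption $n\geq 2$ (for $n=1$ both ideals are trivially Cohen--Macaulay for any $P$).

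For the forward direction, assume $P$ is totally ordered, and apply Theorem \ref{QLPThmPNR} to the projection $\psi : P \times [n] \to [n]$, $(p,i)\mapsto i$. Each fiber $P\times\{i\}$ is a chain in $P$ and hence left strict in $P \times [n]^{\op}$, so the basis $B$ of differences $x_{p,i}-x_{p',i}$ within fibers is a regular sequence on $\kr[x_{P\times[n]}]/L(P,n)$. Under $x_{p,i}\mapsto x_i$, each generator $\prod_{p\in P}x_{i_p,p}$ of $L(P,n)$ maps to $\prod_{p\in P}x_{i_p}$, and these exhaust the monomials of degree $|P|$ in $\kr[x_1,\ldots,x_n]$. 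Hence $L^\psi(P,n)=(x_1,\ldots,x_n)^{|P|}$, whose quotient is artinian and thus Cohen--Macaulay, and Cohen--Macaulayness lifts back through the regular sequence, giving Cohen--Macaulayness of $L(P,n)$.

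For the reverse direction, suppose $p\ne q$ are incomparable in $P$. By the Eagon--Reiner theorem, $L(P,n)$ being Cohen--Macaulay is equivalent to $L(n,P)$ having a linear resolution, so the plan is to produce a non-linear Betti number of $L(n,P)$ via Hochster's formula. Set $\sigma=\{(i,p),(i,q):i\in[n]\}\subset [n]\times P$. The only isotone maps $[n]\to\{p,q\}$ are the two constants, so the minimal non-faces of the restriction $\Delta_\sigma$ of the Stanley--Reisner complex of $L(n,P)$ to $\sigma$ are precisely the rows $R_p=[n]\times\{p\}$ and $R_q=[n]\times\{q\}$. Therefore $\Delta_\sigma=\partial R_p*\partial R_q\simeq S^{n-2}*S^{n-2}\simeq S^{2n-3}$, and Hochster's formula then gives $\beta_{2,\sigma}(\kr[x_{[n]\times P}]/L(n,P))=1$, a Betti number in total degree $2n$. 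A linear resolution in degree $n$ would force all degree-two Betti numbers of the quotient to sit in total degree $n+1$, and $2n>n+1$ for $n\geq 2$ gives the desired contradiction.

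The main obstacle is the identification of $\Delta_\sigma$ with the join of two simplex boundaries, which rests on the key observation that isotone maps into an antichain are constant. Once this is recognised, the homotopy identification $S^a*S^b\simeq S^{a+b+1}$ and the translation between Hochster's formula and the linear strand are standard, and the forward direction is a rather direct application of Theorem \ref{QLPThmPNR}.
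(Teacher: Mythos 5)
Your proof is correct, but it takes a genuinely different route from the paper's, particularly in the converse direction. The paper argues entirely via the artinian reduction $I(n,P)$: since $L(n,P)$ is always Cohen--Macaulay, bi-Cohen--Macaulayness is (by Eagon--Reiner) equivalent to $L(n,P)$ having a linear resolution, which by the preceding corollary is equivalent to the artinian ideal $I(n,P)$ having a linear resolution; an artinian ideal generated in a single degree $n$ has linear resolution iff it equals $\mathfrak{m}^n$ (because linear resolution forces $\operatorname{reg}(S/I)=n-1$ and for an artinian quotient the regularity is the socle degree, so $I_{\geq n}=\mathfrak{m}^n$); and $I(n,P)=\mathfrak{m}^n$ iff every degree-$n$ monomial, in particular each $x_p^{n-1}x_q$, is a multichain monomial, which is exactly total ordering of $P$. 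That one observation handles both directions of the equivalence at once.

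Your forward direction uses the other projection $P\times[n]\to[n]$ and Theorem~\ref{QLPThmPNR} to reduce $L(P,n)$ to $\mathfrak{m}^{|P|}$ in $\kr[x_1,\ldots,x_n]$, which is a perfectly valid alternative; the paper effectively does the same thing on the dual side. Your reverse direction is more hands-on: given incomparable $p,q$, you restrict the Stanley--Reisner complex of $L(n,P)$ to the $2n$ vertices $\{(i,p),(i,q)\}$, correctly observe that the only isotone maps $[n]\to\{p,q\}$ are the two constants so that $\Delta_\sigma=\partial R_p * \partial R_q\simeq S^{2n-3}$, and read off a nonzero Betti number $\beta_{2,\sigma}$ in degree $2n$, contradicting linearity in degree $n+1$ for $n\geq 2$. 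This Hochster-formula argument gives an explicit obstruction to the linear resolution, at the cost of being somewhat heavier machinery than the paper's ``artinian plus linear resolution implies power of $\mathfrak{m}$'' shortcut. Both arguments assume $n\geq 2$ (the paper implicitly; you make it explicit), which is the correct reading of the statement since for $n=1$ both $L(1,P)$ and $L(P,1)$ are Cohen--Macaulay for every $P$.
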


\begin{proof} Since $L(n,P)$ is Cohen-Macaulay, it is 
bi-Cohen-Macaulay iff it has a  linear resolution,
\cite{EaRe}. Equivalently $I(n,P)$ in $\kr[x_P]$ has a linear resolution.
But since $I(n,P)$ gives an artinian quotient ring, this is equivalent
to $I(n,P)$ being the $n$'th power of the maximal ideal.
In this case every monomial $x_p^{n-1} x_q$ is in $I(n,P)$
and so every pair $p,q$ in $P$ is comparable. Thus $P$ is totally ordered.
Conversely, if $P$ is totally ordered, then clearly $I(n,P)$ is
the $n$'th power of the maximal ideal.
\end{proof}

\medskip
 
\begin{definition} \label{Def-QLPSep}
Let $R^\prime \pil R$
be a surjective map of sets with $R^\prime$ of cardinality one more than $R$, 
and let $r_1 \neq r_2$ in $R^\prime$ map to the same element in $R$. 
Let $I$ be a monomial ideal in $k[x_R]$. A monomial ideal $J$ in 
$\kr[x_{R^\prime}]$ is a {\it separation} of $I$ if 
i) $I$ is the image of $J$ by the natural map $\kr[x_{R^\prime}] \pil \kr[x_R]$,
ii) $x_{r_1}$ occurs in
some minimal generator of $J$ and similarly for $x_{r_2}$,
and iii) $x_{r_1} - x_{r_2}$ is a regular element of $\kr[x_{R^\prime}]/J$.

The ideal $I$ is {\it separable} if it has some separation $J$.
Otherwise it is {\it inseparable}. If $J$ is obtained from $I$ by
a succession of separations, we also call $J$ a {\it separation}
of $I$. We say that $I$ is a {\it regular quotient by
variable differences} of $J$, or simply a {\it regular quotient} of 
$J$.  If $J$ is {\it inseparable}, then
$J$ is a {\it separated model} for $I$. 
\end{definition}

This notion also occurs in \cite{Fl} where inseparable monomial ideals
are called maximal. The canonical example of a separation of
a non-squarefree monomial ideal is of course its polarization.

\begin{lemma} 
\label{Lem-LPSep}
Let $I$ be an ideal generated by a subset of the
generators of $L(Q,P)$. Then $I$ is inseparable.
\end{lemma}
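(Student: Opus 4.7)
The plan is to derive a direct contradiction with condition (iii) of any putative separation. Suppose $J \subseteq \kr[x_{R'}]$ is a separation of $I = L(\mathcal{S})$, where the surjection $R' \to R = Q \times P$ has unique non-trivial fiber $\{r_1, r_2\}$ over some element $r = (q_0, p_0)$. I will use (and briefly justify) that such a $J$ is necessarily squarefree and generated in degree $|Q|$; consequently, each minimal generator of $J$ projects under the natural map $\pi : \kr[x_{R'}] \to \kr[x_R]$ onto some $m_{\Gamma \phi}$ with $\phi \in \mathcal{S}$, and a minimal generator of $J$ containing $x_{r_1}$ must take the form $x_{r_1} \prod_{q \neq q_0} x_{q, \phi(q)}$ for some $\phi \in \mathcal{S}$ with $\phi(q_0) = p_0$ (and similarly for $x_{r_2}$).

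Using condition (ii), I choose $\phi_1, \phi_2 \in \mathcal{S}$ with $\phi_i(q_0) = p_0$ so that $n_1 = x_{r_1} n_1'$ and $n_2 = x_{r_2} n_2'$ are minimal generators of $J$, writing $n_i' := \prod_{q \neq q_0} x_{q, \phi_i(q)}$. The key construction is the monomial $M := n_1' n_2'$, which involves neither $x_{r_1}$ nor $x_{r_2}$. A short computation gives $x_{r_1} M = n_1 \cdot n_2' \in J$ and $x_{r_2} M = n_1' \cdot n_2 \in J$, so $(x_{r_1} - x_{r_2}) M \in J$. To finish, it suffices to show $M \notin J$, for then $x_{r_1} - x_{r_2}$ would be a zerodivisor on $\kr[x_{R'}]/J$, contradicting (iii).

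The heart of the proof is the verification that $M \notin J$. Any minimal generator of $J$ that divides $M$ cannot involve $x_{r_1}$ or $x_{r_2}$, and hence must project under $\pi$ to some $m_{\Gamma \phi}$ with $\phi(q_0) \neq p_0$. But every such $m_{\Gamma \phi}$ contains the variable $x_{q_0, \phi(q_0)}$, whereas $M$ has no factor of the form $x_{q_0, *}$ at all; so no minimal generator of $J$ can divide $M$. The main mild obstacle is justifying the claim about the shape of minimal generators of $J$ used at the start, namely that a separation of a squarefree ideal generated in a single degree remains squarefree and generated in the same degree. This falls out of the identity $\pi(J) = I$ from (i) combined with the regularity hypothesis (iii), which forces the graded Hilbert functions of $\kr[x_{R'}]/J$ and $\kr[x_R]/I$ to be compatible in a way that precludes higher-degree or non-squarefree minimal generators of $J$.
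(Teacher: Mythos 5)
The proposal is correct and follows essentially the same argument as the paper: pick minimal generators $n_1 \ni x_{r_1}$ and $n_2 \ni x_{r_2}$ of $J$, strip off those variables, combine the two tails into a monomial $M$ containing no variable with first index $q_0$ (the paper takes their least common multiple, you take their product --- either works), and observe that $M \notin J$ since its image under $\pi$ is not in $L(Q,P)\sups I$, yet $(x_{r_1}-x_{r_2})M\in J$, contradicting regularity. One small correction to your justification of the preliminary structural claim: that $J$ is squarefree, generated in degree $|Q|$, and has minimal generators projecting onto generators $m_{\Gamma\phi}$ is needed by both your proof and the paper's (the latter uses it implicitly when asserting that some $m=x_am_0\in J$ maps to a generator $x_{q,p}m_0$ of $L(Q,P)$), but Hilbert-function compatibility alone does not rule out higher-degree minimal generators of $J$; the right tool is preservation of graded Betti numbers under a regular linear quotient, which the paper records as a corollary of Theorems \ref{QLPThmNPR}--\ref{QLPThmPNR}.
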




\begin{proof} Let $R^\prime \pil Q \times P$ be a surjective map 
with $R^\prime$  of cardinality one more than $Q \times P$. 
Suppose there is a monomial ideal $J$ in $\kr[x_{R^\prime}]$ 
which is a separation 
of $I$. Let $a$ and $b$ in  $R^\prime$ both map to $(q,p)$. 
For any other element of $R^\prime$, we identify it with its image 
in $Q \times P$.
Suppose $m = x_am_0$
in $J$ maps to a generator $x_{q,p} m_0$ of $L(Q,P)$, and
$m^\prime  = x_bm_1$ maps to another generator of $L(Q,P)$. Then 
$m_0$ does not contain a variable $x_{q,p^\prime}$ with first index $q$, 
and similarly for $m_1$. Note that the least common multiple $m_{01}$
of $m_0$ and $m_1$ does not contain a variable with first index $q$.
Hence $m_{01}$ is not in $L(Q,P)$ and so $m_{01}$ is not in $J$. But 
$(x_b - x_a)m_{01}$ is in $J$ since $x_bm_{01}$ and $x_am_{01}$ are in $J$. 
By the regularity of $x_b - x_a$ this implies $m_{01}$ in $J$, a contradiction.
\end{proof} 


As we shall see, many naturally occurring monomial ideals are separable
and have separated models which are letterplace ideals $L(n,P)$
or are generated by a subset of the generators of co-letterplace ideals
$L(P,n)$. 

\begin{remark} \label{QLPRem} 
In \cite[Section 2]{Fl} the first author shows that the 
separated models of the squarefree power $(x_1, \ldots, x_n)^{n-1}_{sq}$
are in bijection with trees on $n$ vertices.
\end{remark}

\medskip
We now consider the Alexander dual of $L^\phi(n,P)$.

\begin{theorem} \label{QLPThmAd}
Let $\phi : [n] \times P \pil R$ be an isotone map such that
the fibers $\phi^{-1}(r)$ are bistrict chains in $[n] \times P^\op$.
Then the ideals $L^\phi(n,P)$ and $L^{\phi^\tau}(P,n)$ are
Alexander dual.
\end{theorem}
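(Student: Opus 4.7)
The plan is to transport the Alexander duality of Proposition \ref{LPProAdual} across the variable identification induced by $\phi$. By Proposition \ref{LPProAdual}, $L(n,P)$ and $L(P,n)^\tau$ are Alexander dual in $\kr[x_{[n] \times P}]$, and the goal is to show the same for their images $L^\phi(n,P)$ and $L^{\phi^\tau}(P,n)$ in $\kr[x_R]$.

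First I would verify that the basis $B$ of variable differences for $\ker \phi_1$ is simultaneously a regular sequence on $\kr[x_{[n] \times P}] / L(n,P)$ and on $\kr[x_{[n] \times P}] / L(P,n)^\tau$. The first of these is exactly Theorem \ref{QLPThmNPR}. For the second, I would apply Theorem \ref{QLPThmPNR} to the companion map $\phi^\tau : P \times [n] \to R$, defined by $\phi^\tau(p,i) = \phi(i,p)$, and then transport the conclusion back via the coordinate swap $\tau$. The key point is that the bistrict chain condition is symmetric under $\tau$: two elements $(i,p) < (j,q)$ of a bistrict chain in $[n] \times P^\op$ satisfy $i \neq j$ and $p \neq q$ strictly in both coordinates, which translates directly to a bistrict (hence left strict) chain in $P \times [n]^\op$. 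The same hypothesis further guarantees, by the remark preceding the theorem statement, that both $L^\phi(n,P)$ and $L^{\phi^\tau}(P,n)$ are squarefree monomial ideals in $\kr[x_R]$.

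With these ingredients in place, I would invoke the general framework of Section \ref{AdSec} on how Alexander duality interacts with regular sequences of variable differences. Concretely, the principle to be applied is: if $I$ and $I^A$ are Alexander dual squarefree ideals in $\kr[x_U]$ and $B$ is a common regular sequence of variable differences whose quotient leaves the images of both $I$ and $I^A$ squarefree, then those images are again Alexander dual in $\kr[x_U]/(B)$. Applied to $L(n,P)$ and $L(P,n)^\tau$ together with the regular sequence $B$ arising from $\phi$, this delivers the theorem.

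The main obstacle is precisely this descent principle. Alexander duality is a delicate combinatorial relation between a squarefree monomial ideal and its minimal primary decomposition, and an arbitrary identification of variables can destroy it: spurious common factors may appear, and the facet structure of the dual simplicial complex need not push down along the identification. The bistrict chain hypothesis is exactly what prevents both pathologies, since it forces each generator of $L(n,P)$ and of $L(P,n)^\tau$ to meet every fiber of $\phi$ in at most one point, ensuring the pushforward behaves like an honest restriction on each side of the duality. Making this rigorous is the technical heart of the argument, and that is why the proof is deferred to Section \ref{AdSec}.
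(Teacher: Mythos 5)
Your proposal follows the same route as the paper: reduce to the Alexander duality of $L(n,P)$ and $L(P,n)^\tau$ from Proposition~\ref{LPProAdual}, observe that the bistrict fiber hypothesis makes both images squarefree, and descend along the regular sequence $B$ via the compatibility of Alexander duality with regular variable-difference quotients, which is exactly Proposition~\ref{QLPProAlexDual} in Section~\ref{AdSec}. The only small inefficiency is that you invoke Theorem~\ref{QLPThmPNR} separately to get regularity of $B$ on the dual side, whereas part (b) of Proposition~\ref{QLPProAlexDual} already delivers this for free once squarefreeness of $L^\phi(n,P)$ is known.
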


We prove this in Section \ref{AdSec}.

\begin{remark} 
The Alexander dual of the squarefree power in Remark \ref{QLPRem} is the
squarefree power $(x_1, \ldots, x_n)^2_{sq}$. Separations of this ideal
are studied by H.Lohne, \cite{Loh}. In particular he describes how the
separated models are also in bijection with trees on $n$ vertices.
\end{remark}


\section{Examples of regular quotients of letterplace ideals}
\label{ExQLPSec}

The ideals which originally inspired this
paper are the multichain ideals of \cite{EHM}. 

\subsection{Multichain ideals}
\label{MuChSubsec}

Let $P_m$ be $P \times [m]$ where $m \geq 1$. Consider the surjective
map
\begin{align*}
[s] \times P_m & \pil   P \times [m+s-1] \\
(i,p,a) & \mapsto  (p,a+i-1).
\end{align*}

This map has left strict chain fibers.
The image of $L(s,P_m)$ in $\kr[x_{P \times [m+s-1]}]$ is exactly
the multichain ideal $I_{m+s-1,s}(P)$ of \cite{EHM}. 
This is the ideal generated by monomials
\[ x_{p_1,i_1} x_{p_2, i_2} \cdots x_{p_s, i_s} \]
where 
\[ p_1 \leq \cdots \leq p_s, \quad 1 \leq i_1 < \cdots < i_s \leq m+s-1. \] 
By Theorem \ref{QLPThmNPR} it is obtained from the $s$'th letterplace 
ideal $L(s,P_m) = L(s, P \times [m])$ by
cutting down by a regular sequence.
Thus we recover the fact, \cite[Thm. 2.4]{EHM}, 
that these ideals are Cohen-Macaulay. 
\medskip

The Alexander dual of $L(s,P_m)$ is $L(P_m,s)$. 
An element $r$ of $\Hom(P \times [m], [s])$ may be represented by 
sequences
\[ 1 \leq r_{p1} \leq \cdots \leq r_{pm} \leq s \]
such that for each $p \leq q$ we have $r_{pj} \leq r_{qj}$. 

The element $r$ gives the monomial generator in $L(P_m,s)$
\[ m_r = \underset{p \in P}{\prod}  \prod_{i = 1}^m x_{p,i,r_{pi}}.\]

By Theorem \ref{QLPThmAd}, the Alexander dual of the multichain 
ideal $I_{m+s-1,s}(P)$ is then generated by 
\[ \underset{p \in P}{\prod}  \prod_{i = 1}^m x_{p,t_{pi}}, 
\quad 1 \leq t_{p1} < t_{p2} < \cdots < t_{p_m} \leq m+s-1 \]
(where $t_{pi} = r_{pi} + i-1$)
such that $p < q$ implies $t_{pj} \leq t_{qj}$.
These are exactly the generators of the squarefree power ideal
$L(P, s+m-1)^{\langle m \rangle}$. This recovers Theorem 1.1(b)
in \cite{EHM}. 

\subsection{Initial ideals of determinantal ideals: two-minors}

We now let $P = [n]$ and $s = 2$. Let $e,f \geq 0$. 
There are isotone maps
\begin{align*} [2] \times [n] \times [m] = [2] \times P_m 
& \mto{\phi_{e,f}} [n+e] \times [m+f] \\
(1,a,b) & \mapsto (a,b) \\
(2,a,b) & \mapsto (a+e,b+f)
\end{align*}

These maps have left strict chain fibers and we
get the ideal $L^{\phi_{e,f}}(2,P_m)$. 

\begin{itemize}
\item When $(e,f) = (0,1)$ we are in the situation of the previous
Subsection \ref{MuChSubsec}, and we get
the multichain ideal $I_{m+1,2}([n])$. 
\item When $(e,f) = (1,0)$ we get the multichain  ideal $I_{n+1,2}([m])$.
\item When $(e,f) = (1,1)$ we get the ideal in $\kr[x_{[n+1] \times [m+1]}]$
generated by monomials $x_{i,j}x_{i^\prime,j^\prime}$ where $i < i^\prime$
and $j < j^\prime$. This is precisely the initial  ideal $I$ of 
the ideal of two-minors of a generic $(n+1) \times (m+1)$ matrix
of linear forms $(x_{i,j})$ with respect to a suitable monomial 
order with respect to a diagonal term order, \cite{Stu}.
\end{itemize}

In particular all of $I_{m+1,2}([n]), I_{n+1,2}([m])$ and $I$ have the
same graded Betti numbers and the same $h$-vector, 
the same as $L(2,[n] \times [m])$.

Particularly noteworthy is the following: The ideal of two-minors of the generic
$(n+1) \times (m+1)$ matrix is the homogeneous ideal of the 
Segre product of $\PP^m \times \PP^n$ in $\PP^{nm+n+m}$. 
By J.Kleppe \cite{Kl}, any deformation  of a generic determinantal ideal
is still a generic determinantal ideal. So if this Segre embedding is 
obtained from
a variety $X$ in a higher dimensional projective space,
by cutting it down by a regular sequence of linear forms, this $X$ must
be a cone over the Segre embedding.
Thus we cannot ``lift'' the ideal of two minors to an ideal in 
a polynomial ring with more variables than $(n+1)(m+1)$. 
However its initial ideal
may be separated to the monomial ideal
$L(2, [n] \times [m])$ with $2nm$ variables.

\medskip
Varying $e$ and $f$, we get a whole family of ideals 
$L^{\phi_{e,f}}(2,[n] \times [m])$
with the same Betti numbers as the initial  ideal of the ideal of 
two-minors. When $e = 0 = f$ we get an artinian reduction, not of the
initial  ideal of the ideal of two-minors, but of its separated model 
$L(2,[n] \times [m])$. When $e \geq n+1$ and $f \geq m+1$, the 
map $\phi_{e,f}$ is injective and $L^{\phi_{e,f}}(2,[n] \times [m])$
is isomorphic to the ideal generated by  $L(2,[n] \times [m])$ 
in a polynomial ring with more variables.

\subsection{Initial ideals of determinantal ideals: higher minors}

We may generalize to arbitrary $s$ and two weakly increasing sequences
\[ \bfe = (e_1 = 0, e_2, \ldots, e_s), \quad \bff = (f_1 = 0, f_2, \ldots, 
f_s). \]

We get isotone maps 
\begin{align*}
[s] \times [n] \times [m] & \mto{}  [n+e_s] \times [m+f_s] \\
(i,a,b) & \mapsto  (a+e_i, b+f_i) 
\end{align*}

\begin{itemize}
\item When $\bfe  = (0, \ldots, 0)$ and $\bff = (0,1, \ldots, s-1)$
we get the multichain ideal $I_{m+s-1,s}([n])$. 
\item When  $\bfe = (0,1, \ldots, s-1)$ and $\bff = (0, \ldots, 0)$
we get the multichain ideal $I_{n+s-1,s}([m])$. 
\item When $\bfe = (0,1, \ldots, s-1)$ and $\bff = (0,1, \ldots, s-1)$
we get the ideal $I$ generated by monomials
\[ x_{i_1, j_1} x_{i_2, j_2} \cdots x_{i_{s},j_s} \]
where $i_1 < \cdots < i_s$ and $j_1 < \cdots < j_s$. 
This is the initial ideal $I$ of the ideal of $s$-minors of a general 
$(n+s-1) \times (m+s-1)$ matrix $(x_{i,j})$ with respect to a diagonal
term order, \cite{Stu}.
\end{itemize}

We thus see that this initial ideal $I$ has a lifting to 
$L(s,[n] \times [m])$ with $snm$ variables, in contrast to the
$(n+s-1)(m+s-1)$ variables which are involved in the ideal
of $s$-minors. We get maximal minors when, say $m = 1$. Then the
initial ideal $I$ involves $sn$ variables. So in this case 
the initial ideal $I$ involves the same number of variables
as $L(s,[n])$, i.e. the generators of these two ideals are in one to one
correspondence by a bijection of variables. 

\subsection{Initial ideal of the ideal of two-minors of a symmetric
matrix}
Let $P = \Hom([2],[n])$. The elements here may be identified
with pairs $(i_1,i_2)$ where $1 \leq i_1 \leq i_2 \leq n$. 
There is an isotone map
\begin{align*} 
\phi : [2] \times \Hom([2],[n]) & \pil \Hom([2],[n+1]) \\
(1,i_1,i_1) & \mapsto (i_1,i_2) \\
(2,i_1,i_2) & \mapsto (i_1+1, i_2 +1).
\end{align*}
This map has left strict chain fibers, and 
we get a regular quotient ideal $L^{\phi}(2, \Hom([2],[n]))$,
generated by $x_{i_1,i_2} x_{j_1,j_2}$ where $i_1 < j_1$ and $i_2 < j_2$
(and $i_1 \leq i_2$ and $j_1 \leq j_2$).
This is the initial ideal of the ideal generated by $2$-minors
of a symmetric matrix of size $n+1$, see \cite[Sec.5]{CHT}.

\subsection{Ladder determinantal ideals}
Given a poset ideal $\cJ$
in $[m] \times [n]$. This gives the letterplace ideal $L(2,\cJ)$. 
There is a map 
\begin{align*}
\phi : [2] \times \cJ & \pil [m+1] \times [n+1] \\
(1,a,b) & \mapsto (a,b) \\
(2,a,b) & \mapsto (a+1,b+1)
\end{align*}
The poset ideal $\cJ$ is sometimes also called a one-sided ladder in 
$[m] \times [n]$.
The ideal $L^\phi(2, \cJ)$ is the initial ideal of the ladder determinantal ideal
associated to $\cJ$, \cite[Cor.3.4]{Nar}.
Hence we recover the fact that these are Cohen-Macaulay,
\cite[Thm.4.9]{HeTr}.

\subsection{Pfaffians}
Let $T(n)$ be the poset in $[n] \times [n]$ consists
of all $(a,b)$ with $a+b \leq n+1$.
Then $L^{\phi}(2,T(n))$ is the initial ideal of the ideal of 
$4$-Pfaffians of a skew-symmetric matrix of rank $n+3$,
\cite[Sec.5]{HeTr}. It is also the initial ideal of the Grassmannian
$G(2,n+3)$, \cite[Ch.6]{HeEn}.

The poset $T(2)$ is the $V$ poset. 
The letterplace ideals $L(n,T(2))$ are the initial ideals of 
the $2n$-Pfaffians of a generic $(2n+1) \times (2n+1)$ skew-symmetric matrix,
by \cite[Thm.5.1]{HeTr}. The variables $X_{i,2n+2-i}$ in loc.cit. correspond
to our variables $x_{i,(1,1)}$ for $i = 1, \ldots, n$, 
the variables $X_{i+1,2n+2-i}$ correspond to the $x_{i,(2,1)}$ and the 
$X_{i,2n+1-i}$ to the $x_{i,(1,2)}$.

\section{Description of facets and ideals} \label{FacetSec}

As we have seen $\Hom(Q,P)$ is itself a poset. The product $P \times Q$
makes the category of posets $\Poset$ into a symmetric monoidal category,
and with this internal $\Hom$, it is a symmetric monoidal closed category
\cite[VII.7]{MacL},
i.e. there is an adjunction of functors
\[ \Poset \bihom{- \times P}{\Hom(P,-)} \Poset \]
so that 
\[  \Hom(Q \times P, R) \iso \Hom(Q, \Hom(P,R)). \]
This is an isomorphism of posets.
Note that the distributive lattice $D(P)$ associated to $P$, 
consisting of the poset ideals in $P$, identifies with $\Hom(P,[2])$. 
In particular $[n+1]$ identifies as $\Hom([n],[2])$.
The adjunction above gives isomorphisms between the following posets.

\begin{itemize}
\item[1.] $\Hom([m], \Hom(P,[n+1]))$
\item[2.] $\Hom([m] \times P,  [n+1]) = \Hom([m] \times P,  \Hom([n],[2]))$
\item[3.] $\Hom([m] \times P \times [n], [2])$
\item[4.] $\Hom([n] \times P, \Hom([m],[2])) = \Hom([n] \times P, [m+1])$
\item[5.] $\Hom([n], \Hom(P,[m+1]))$
\end{itemize}


These $\Hom$-posets normally give distinct letterplace or
co-letterplace ideals associated to the same underlying (abstract) poset. 
There are natural bijections between the generators. The degrees of the 
generators are normally distinct, and so they
have different resolutions.

Letting $P$ be the one element poset, we get from 2.,3., and 4. above
isomorphisms
\begin{eqnarray} \label{FacetLigPart} \quad
\Hom([m],[n+1]) \iso \Hom([m] \times [n], [2]) \iso 
\Hom([n],[m+1]). 
\end{eqnarray}

An element $\phi$ in $\Hom([m],[n+1])$ identifies as a partition
$\la_1 \geq \cdots \geq \la_m \geq 0$ with $m$ parts of sizes $\leq n$, by
$\phi(i) = \la_{m+1-i} + 1$. The left and right side of the isomorphisms
above give the correspondence between a partition and its dual.
This poset is the Young lattice. In Stanley's book \cite{Sta},
Chapter 6 is about this lattice, there denoted $L(m,n)$.

\medskip
Letting $m = 1$ we get by 2.,3., and 5. isomorphims:
\[ \Hom(P,[n+1]) \iso \Hom(P \times [n], [2]) \iso \Hom(n,D(P))\]
and so we have ideals
\[ L(P,n+1), \quad L(P \times [n], [2]), \quad L(n,D(P))\]
whose generators are naturally in bijection with each other, in particular
with elements of $\Hom([n],D(P))$, which are chains of poset ideals in 
$D(P)$:
\begin{equation} \label{QLPLigFilt}
 \emptyset = \cJ_0 \sus \cJ_1 \sus \cdots \sus \cJ_{n} \sus 
\cJ_{n+1} = P.
\end{equation}
The facets of the simplicial complexes associated to their Alexander duals 
\[ L(n+1, P), \quad L(2, P \times [n]), \quad L(D(P),n),  \]
are then in bijection with elements of $\Hom([n],D(P))$.

For a subset $A$ of a set $R$, let $A^c$ denote
its complement $R \backslash A$. 

\medskip
1. The facets of the simplicial complex associated to 
$L(n+1,P)$ identifies as the complements $(\Gamma \phi)^c$
of graphs of $\phi : P \pil [n+1]$.
This is because these facets correspond to the complements of the
set of variables in the generators in the Alexander dual 
$L(P,n+1)$ of $L(n+1,P)$. 

\medskip
For isotone maps $\alpha : [n+1] \times P \pil R$
having bistrict chain fibers, the associated simplicial complex of the ideal 
$L^\alpha(n+1,P)$, has also facets
in one-to-one correspondence with $\phi : P \pil [n+1]$, 
or equivalently $\phi^\prime : [n] \pil D(P)$, but the
precise description varies according to $\alpha$.

\medskip
2. The facets of the simplicial complex associated to $L(2, P \times
[n])$ identifies as the complements $(\Gamma \phi)^c$ 
of the graphs of $\phi: P \times [n] \pil [2]$. Alternatively the
facets identifies as the graphs $\Gamma \phi^\prime$ of 
$\phi^\prime : P \times [n] \pil [2]^{op}$.

\medskip
3. Let 
\[ \alpha : [2] \times P \times [n] \pil P \times [n+1], \quad
(a,p,i) \mapsto (p,a+i-1). \]
The ideal $L^\alpha(2, P \times [n])$ is the multichain ideal $I_{n+1,2}(P)$.
The generators of this ideal are $x_{p,i}x_{q,j}$ where $p \leq q$ and 
$i < j$. The facets of the simplicial complex associated to this
ideal are the graphs $\Gamma \phi$ of $\phi : P \pil [n+1]^{op}$.

\section{Co-letterplace ideals of poset ideals} \label{CLPSec}

\subsection{The ideal $L(P,n;\cJ)$}
Since $\Hom(P,[n])$ is itself a partially ordered set, we can consider
poset ideals $\cJ \sus \Hom(P,[n])$ and form the subideal $L(P,n;\cJ)$
of $L(P,n)$ generated by the monomials $m_{\Gamma \phi}$ where $\phi \in \cJ$. 
We call it the {\it co-letterplace ideal of the poset ideal $\cJ$}.
For short we often write $L(\gJ)$ and call it simply a co-letterplace ideal.
For the notion of linear quotients we refer to \cite{HeHiMon}.

\begin{theorem}
\label{CLPThmPosid}
Let $\cJ$ be a poset ideal in $\Hom(P, [n])$. Then $L(P,n;\cJ )$
has linear quotients, and so it has linear resolution.
\end{theorem}

\begin{proof}
We extend the partial order $\leq$ on $\cJ$ to a total order, 
denoted $\leq^t$, and define an order on the generators of 
$L(\gJ)$ be setting $m_{\Gamma \psi}\geq  m_{\Gamma \phi}$ if and only if 
$\psi\leq^t \phi$. We claim that $L(\cJ)$ has linear quotients with respect 
to this total order of the monomial generators of $L(\cJ)$. Indeed,
let $m_{\Gamma \psi}>m_{\Gamma \phi}$ where $\psi \in \cJ$. 
Then $\psi <^t \phi$, and hence there 
exists $p\in P$ such that
$\psi(p)<\phi(p)$. We choose  a $p\in P$ which is minimal with this property. 
Therefore, if $q<p$, then $\phi(q)\leq\psi(q)\leq \psi(p)<\phi(p)$. We set
\[
\psi'(r)= \left\{ \begin{array}{ll}
       \psi(r), & \;\textnormal{if  $r=p$}, \\
       \phi(r), & \;\textnormal{otherwise.}
        \end{array} \right.
\]
Then $\psi'\in \Hom(P,n)$ and $\psi'<\phi$ for the original order on $P$. 
It follows that $\psi'\in \cJ$, 
and $m_{\Gamma \psi'}>m_{\Gamma \phi}$. Since $(m_{\Gamma \psi'}):m_{\Gamma \phi}=(x_{p,\psi(p)})$ and since 
$x_{p,\psi(p)}$ divides $m_{\Gamma \psi}$, the desired conclusion follows.
\end{proof}

\begin{remark} \label{CLPRemWOrd}
One may fix a maximal element $p \in P$. The statement above still
holds if $\cJ$ in $\Hom(P,[n])$ is a poset ideal for the weaker
partial order $\leq^w$ on $\Hom(P,[n])$ where $\phi \leq^w \psi$ if 
$\phi(q) \leq \psi(q)$ for $q \neq p$ and $\phi(p) = \psi(p)$.
Just let the total order still refine the standard partial order on the 
$\Hom(P,[n])$.
Then one deduces either $\psi^\prime \leq^w \phi$ or $\psi^\prime \leq^w \psi$.
In either case this gives $\psi^\prime \in \cJ$.
\end{remark}

%
%

\medskip
For an isotone map $\phi : P \pil [n]$, we define the set
\begin{equation} \label{CLPLigLam} 
\La\phi = \{ (p,i) \, | \, \phi(q) \leq i<\phi(p) 
\text{ for all } q<p\}. 
\end{equation}
It will in the next subsection play a role somewhat analogous
to the graph $\Gamma \phi$.
For $\phi \in \cJ$ we let $J_\phi$ be the ideal generated by all $m_{\Gamma \psi}$ with 
$m_{\Gamma \psi}>m_{\Gamma \phi}$, where we use the total order in the proof
of Theorem \ref{CLPThmPosid} above. 
In analogy to \cite[Lemma 3.1]{EHM} one obtains:

\begin{corollary}
\label{colon}
Let $\phi\in \cJ$. Then  $J_\phi:m_{\Gamma \phi}$ is 
$\{ x_{p,i} \, | \, (p,i) \in \La \phi \}$.
\end{corollary}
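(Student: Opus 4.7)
The plan is to establish both inclusions of the stated equality, each being essentially a reformulation of the construction used in the proof of Proposition \ref{CLPProPosid}.

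For the inclusion $(\{x_{p,i}:(p,i)\in\La\phi\})\subseteq J_\phi:m_{\Gamma\phi}$, given $(p,i)\in\La\phi$ I would define $\psi'\in\Hom(P,[n])$ by $\psi'(p)=i$ and $\psi'(r)=\phi(r)$ for $r\neq p$. The defining inequalities $\phi(q)\leq i$ for $q<p$ together with $i<\phi(p)\leq\phi(r)$ for $r>p$ are precisely what is needed to verify that $\psi'$ is isotone. Since $\psi'\leq\phi$ strictly in the partial order and $\cJ$ is a poset ideal, $\psi'\in\cJ$; moreover $\psi'<^t\phi$ so $m_{\Gamma\psi'}>m_{\Gamma\phi}$, and hence $m_{\Gamma\psi'}\in J_\phi$. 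As $\psi'$ and $\phi$ coincide off $p$, one has $(m_{\Gamma\psi'}):m_{\Gamma\phi}=(x_{p,i})$, which places $x_{p,i}$ in $J_\phi:m_{\Gamma\phi}$.

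For the reverse inclusion I would use that $J_\phi:m_{\Gamma\phi}$ is generated by the monomials $m_{\Gamma\psi}/\gcd(m_{\Gamma\psi},m_{\Gamma\phi})$ as $\psi$ runs over $\cJ$ with $m_{\Gamma\psi}>m_{\Gamma\phi}$. For each such $\psi$ I would pick $p\in P$ minimal with $\psi(p)<\phi(p)$, exactly as in the proof of Proposition \ref{CLPProPosid}. By minimality of $p$ one obtains $\psi(q)\geq\phi(q)$ for $q<p$, hence $\phi(q)\leq\psi(q)\leq\psi(p)<\phi(p)$; this is exactly the condition $(p,\psi(p))\in\La\phi$. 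Since $x_{p,\psi(p)}$ divides $m_{\Gamma\psi}$ but not $m_{\Gamma\phi}$, it divides the colon generator $m_{\Gamma\psi}/\gcd(m_{\Gamma\psi},m_{\Gamma\phi})$, placing the latter in the ideal $(\{x_{p,i}:(p,i)\in\La\phi\})$.

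The only mild subtlety, which is also present in the proof of Proposition \ref{CLPProPosid}, is justifying the existence of $p$ with $\psi(p)<\phi(p)$ whenever $m_{\Gamma\psi}>m_{\Gamma\phi}$: this reduces to observing that $\psi<^t\phi$ rules out $\phi\leq\psi$ in the partial order, so either $\psi<\phi$ or $\psi$ and $\phi$ are incomparable, both of which furnish such a $p$. Beyond that, the argument is bookkeeping layered on top of the linear-quotients construction already carried out; the corollary's real content is to recognize that the single-variable colons produced there are indexed precisely by $\La\phi$.
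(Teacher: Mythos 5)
Your proof is correct and follows the same route as the paper: the inclusion $\supseteq$ uses exactly the paper's construction of $\psi'$ with $\psi'(p)=i$, and the inclusion $\subseteq$ is read off from the minimality argument already present in the proof of Proposition \ref{CLPProPosid}, with the explicit observation that the chain $\phi(q)\leq\psi(q)\leq\psi(p)<\phi(p)$ is precisely $(p,\psi(p))\in\La\phi$. The only difference is presentational: you spell out the step, and the existence of $p$ with $\psi(p)<\phi(p)$, that the paper handles by reference.
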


\begin{proof}
The inclusion $\subseteq$ has been shown in the proof of 
Theorem \ref{CLPThmPosid}. 
Conversely, 
let $x_{p,i}$ be an element of the right hand set.
We set
\[
\psi(r)= \left\{ \begin{array}{ll}
       i, & \;\textnormal{if  $r=p$}, \\
       \phi(r), & \;\textnormal{otherwise.}
        \end{array} \right.
\] 
Then $m_{\Gamma \psi}\in J_\phi$ and $(m_{\Gamma \psi}):m_{\Gamma \phi}=(x_{p,i})$. This proves the other 
inclusion.
\end{proof}

\begin{corollary}
The projective dimension of $L(P,n;\cJ)$ is the maximum of the
cardinalities $|\La \phi|$ for $\phi \in \cJ$.
\end{corollary}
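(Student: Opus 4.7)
My plan is to combine Proposition \ref{CLPProPosid} and Corollary \ref{colon} with the standard formula for the projective dimension of an ideal with linear quotients. Recall the well-known result of Herzog--Takayama: if a monomial ideal $I$ has linear quotients with respect to an ordering $u_1, u_2, \ldots, u_m$ of its minimal generators, and for each $k$ the colon ideal $(u_1, \ldots, u_{k-1}) : u_k$ is generated by a subset $\mathrm{set}(u_k)$ of the variables, then
\[
\beta_i(I) \;=\; \sum_{k=1}^{m} \binom{|\mathrm{set}(u_k)|}{i},
\]
and consequently $\projdim(I) = \max_k |\mathrm{set}(u_k)|$.

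First I would invoke Proposition \ref{CLPProPosid} to conclude that $L(\cJ)$ has linear quotients with respect to the total order on the generators $\{m_{\Gamma\phi}\}_{\phi \in \cJ}$ used in its proof. Then I would apply Corollary \ref{colon}, which identifies the colon ideal $J_\phi : m_{\Gamma\phi}$ as the ideal generated by the variables $\{x_{p,i} \mid (p,i) \in \La\phi\}$. In particular, each colon is generated by variables, so the hypothesis of the Herzog--Takayama formula is met and
\[
|\mathrm{set}(m_{\Gamma\phi})| \;=\; |\La\phi|.
\]

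Substituting into the formula above gives $\projdim(L(\cJ)) = \max_{\phi \in \cJ} |\La\phi|$, which is the claim. There is no genuine obstacle here: the statement is a direct corollary of the two preceding results together with the standard computation of Betti numbers of ideals with linear quotients whose colons are generated by variables.
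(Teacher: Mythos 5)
Your proposal is correct and follows the same route as the paper: the paper's proof is precisely to combine Corollary~\ref{colon} with Lemma~1.5 of \cite{HeTa} (the Herzog--Takayama Betti number formula for ideals with linear quotients), which yields $\projdim L(\cJ) = \max_{\phi\in\cJ}|\La\phi|$. No gaps.
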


\begin{proof} 
This follows by the above Corollary~\ref{colon}
and Lemma 1.5 of \cite{HeTa}.
\end{proof}

\begin{remark}
By  \cite[Cor.3.3]{EHM} the projective dimension of $L(P,n)$ is 
$(n-1)s$ where $s$ is the size of a maximal antichain in $P$.
It is not difficult to work this out as a consequence of the above
when $\cJ = \Hom(P,[n])$.

An explicit form of the minimal free resolution of $L(P,n)$
is given in \cite[Thm. 3.6]{EHM}, and this is generalized
to $L(P,n;\cJ)$ in \cite{DaFlNeCoLP}.
\end{remark}

\subsection{Regular quotients of $L(P,n;\cJ)$}
We now consider co-letterplace ideals of poset ideals when
we cut down by a regular sequence of variable differences.
The following generalizes Theorem \ref{QLPThmPNR} and we prove
it in Section \ref{RegSec}.

\begin{theorem} \label{CLPThmPNRJ} Given an isotone map
$\psi: P \times [n] \pil R$ with left strict chain fibers. 
Let $\cJ$ be a poset ideal in $\Hom(P,n)$.
Then the basis $B$ (as defined before Theorem \ref{QLPThmNPR}) 
is a regular sequence for the ring
$\kr[x_{P \times [n]}]/L(P,n;\cJ)$. 
\end{theorem} 

\subsection{Alexander dual of $L(P,n;\cJ)$}

 We describe the Alexander dual of $L(\cJ) = L(P,n;\cJ)$ when
$\cJ$ is a poset ideal in $\Hom(P,[n])$. We denote this Alexander
dual ideal as $L(\cJ)^A = L(n,P;\cJ)$. 
Note that since $L(P,n;\cJ) \sus L(P,n)$, 
the Alexander dual $L(n,P;\cJ)$ contains the letterplace ideal $L(n,P)$, and
since $L(P,n;\cJ)$ has linear resolution, the Alexander dual
$L(n,P;\cJ)$ is a Cohen-Macaulay ideal,
by \cite{EaRe}. Recall the set $\La \phi$ defined above (\ref{CLPLigLam}), 
associated to a map $\phi \in \Hom(P,[n])$. 

\begin{lemma} \label{AdJLemJJc}
Let $\cJ$ be a poset ideal in $\Hom(P,[n])$. Let $\phi \in \cJ$
and $\psi$ be in the complement $\cJ^c$. Then 
$\La\psi \cap \Gamma \phi$ is nonempty.
\end{lemma}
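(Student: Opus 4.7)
The plan is to find a single element $p \in P$ such that the point $(p,\phi(p))$ lies in both $\Gamma\phi$ (trivially) and $\La\psi$. Unpacking the definition of $\La\psi$, we need $p$ with
\[ \psi(q) \leq \phi(p) < \psi(p) \quad \text{for all } q < p. \]

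First I would exploit the hypothesis that $\cJ$ is a poset ideal of $\Hom(P,[n])$. Since poset ideals are downward closed and $\phi \in \cJ$ while $\psi \notin \cJ$, we cannot have $\psi \leq \phi$. Hence the set
\[ S = \{\, p \in P \,:\, \psi(p) > \phi(p) \,\} \]
is nonempty. This is the only place the hypothesis $\phi \in \cJ$, $\psi \in \cJ^c$ enters.

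Next, I would choose $p$ to be a minimal element of $S$ with respect to the order of $P$. The condition $\phi(p) < \psi(p)$ holds by membership in $S$, so the right-hand inequality in the definition of $\La\psi$ is satisfied. For the left-hand inequality, take any $q < p$. By minimality of $p$, we have $q \notin S$, i.e.\ $\psi(q) \leq \phi(q)$; and since $\phi$ is isotone, $\phi(q) \leq \phi(p)$. Chaining these gives $\psi(q) \leq \phi(p)$, as required. Therefore $(p,\phi(p)) \in \La\psi \cap \Gamma\phi$.

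I do not expect any serious obstacles here: the argument is essentially an application of the downward-closedness of $\cJ$ together with the choice of a minimal witness, mirroring the minimality argument used in the proof of Proposition~\ref{CLPProPosid}. The only mildly subtle point is to correctly translate ``$\psi \notin \cJ$'' into a concrete pointwise inequality via the poset ideal property, which is immediate from the definition.
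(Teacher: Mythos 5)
Your proof is correct and follows essentially the same route as the paper's: both use downward-closedness of $\cJ$ to get a point where $\psi > \phi$, pick a minimal such $p$, and combine minimality with isotonicity of $\phi$ to verify $(p,\phi(p)) \in \La\psi$. The only cosmetic difference is that the paper phrases the verification as a proof by contradiction while you verify the defining inequalities of $\La\psi$ directly.
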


\begin{proof}
There is some $p \in P$ with $\psi(p) > \phi(p)$. Choose $p$ to be minimal 
with this property, and
let $i = \phi(p)$. 
If $(p,i)$ is not in $\La\psi$, there must be $q < p$ with 
$\psi(q) > i = \phi(p) \geq \phi(q)$. But this contradicts $p$ being
minimal. Hence $(p,i) = (p, \phi(p))$ is both in $\Gamma \phi$
and $\La\psi$. 
\end{proof}

\begin{lemma} \label{AdJLemSupB}
Let $S$ be a subset of $P \times [n]$ which is disjoint from  $\Gamma \phi$
for some $\phi$ in $\Hom(P,[n])$. If $\phi$ is a minimal such element
w.r.t. the
partial order on $\Hom(P,[n])$, then $S \supseteq \La \phi$. 
\end{lemma}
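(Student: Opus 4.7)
The plan is to argue by contradiction: assume some $(p,i) \in \La\phi$ is missing from $S$, and use $(p,i)$ to produce an isotone map $\phi' < \phi$ still disjoint from $S$, contradicting minimality of $\phi$.

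Concretely, I would define $\phi' : P \to [n]$ by $\phi'(p) = i$ and $\phi'(r) = \phi(r)$ for $r \neq p$. The first task is to verify $\phi' \in \Hom(P,[n])$, i.e.\ isotony. For comparable pairs not involving $p$ this is immediate. For $q < p$, the defining inequality $\phi(q) \leq i$ of $\La\phi$ gives $\phi'(q) = \phi(q) \leq i = \phi'(p)$. For $p < r$, isotony of $\phi$ combined with $i < \phi(p)$ yields $\phi'(r) = \phi(r) \geq \phi(p) > i = \phi'(p)$. So $\phi'$ is isotone.

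Next, $\phi' < \phi$ in the pointwise partial order on $\Hom(P,[n])$: the two maps agree off $p$, and at $p$ we have $\phi'(p) = i < \phi(p)$ by the second defining condition of $\La\phi$. Finally, I would check that $\Gamma\phi'$ is still disjoint from $S$. Writing $\Gamma\phi' = \big(\Gamma\phi \setminus \{(p,\phi(p))\}\big) \cup \{(p,i)\}$, the first piece lies in $\Gamma\phi$, hence is disjoint from $S$ by hypothesis, while the second piece $(p,i)$ is outside $S$ by our standing assumption. Thus $S \cap \Gamma\phi' = \emptyset$, which contradicts the minimality of $\phi$. This forces $\La\phi \subseteq S$, completing the proof.

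I do not expect any essential obstacle: the only subtle point is verifying isotony of $\phi'$ at pairs involving $p$, and for this both halves of the definition of $\La\phi$ are used exactly once (the inequality $\phi(q)\leq i$ for $q<p$, and $i<\phi(p)$ which combines with isotony of $\phi$ to handle $r>p$). Everything else is bookkeeping about the partial order on $\Hom(P,[n])$ and the description of $\Gamma\phi'$.
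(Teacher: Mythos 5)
Your proof is correct and follows essentially the same approach as the paper's: both assume some $(p,i) \in \La\phi$ is absent from $S$, define $\phi'$ by lowering $\phi$ at $p$ to $i$, and derive a contradiction with minimality. You simply spell out the isotony verification and the disjointness of $\Gamma\phi'$ from $S$ in more detail than the paper does.
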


\begin{proof}
Suppose $(p,i) \in \La \phi$ and $(p,i)$ is not in $S$. 
Define $\phi^\prime : P \pil [n]$ by 
\[ \phi^\prime(q) = \begin{cases} \phi(q), & q \neq p \\
                                  i, & q = p
\end{cases} \]
By definition of $\La \phi$ we see that $\phi^\prime$ is an isotone map,
and $\phi^\prime < \phi$. But since $S$ is disjoint from 
$\Gamma \phi$, we see that it is also disjoint from  $\Gamma \phi^\prime$.
This contradicts $\phi$ being minimal. Hence every 
$(p,i) \in \La \phi$ is also in $S$. 
\end{proof}

For a subset $\cS$ of $\Hom(P,[n])$ define $K(\cS) \sus \kr[x_{[n] \times P}]$
to be the ideal generated by the monomials $m_{\La \phi^\tau}$ where
$\phi \in \cS$. 

\begin{theorem} \label{CLPThmAd}
The Alexander dual $L(n,P;\cJ)$ is $L(n,P) + K(\cJ^c)$. 
This is a Cohen-Macaulay ideal of codimension $|P|$.
\end{theorem}

\begin{proof} It is Cohen-Macaulay, in fact shellable, since
  $L(\cJ)$ has linear quotients by Theorem \ref{CLPThmPosid}.
The facets of the simplicial complex corresponding to $L(\cJ)^A$
are the complements of the generators of $L(\cJ)$. Hence
these facets have codimension $|P|$.

To prove the first statement we show the following.

\noindent 1. The right ideal is contained in the Alexander dual of the
left ideal: Every monomial in $L(n,P) + K(\cJ^c)$ has non-trivial 
common divisor with  every monomial in $L(\cJ)$.

\noindent 2. The Alexander dual of the left ideal is contained in
the right ideal: If $S \sus [n] \times P$ intersects every $\Gamma \phi^\tau$ 
where $\phi \in \cJ$, the monomial  $m_S$ is in $L(n,P) + K(\cJ^c)$.

\medskip
\noindent 1a. Let $\psi \in \Hom([n],P)$. 
Since $L(n,P)$ and $L(P,n)$ are Alexander dual, 
$\Gamma \psi \cap \Gamma \phi^\tau$
is non-empty for every $\phi \in \Hom(P,[n])$ and so in particular for
every $\phi \in \cJ$.

\noindent 1b. If $\psi \in \cJ^c$ then $\La \psi \cap \Gamma \phi$ is nonempty
for every $\phi \in \cJ$ by Lemma \ref{AdJLemJJc}.

Suppose now $S$ intersects every $\Gamma \phi^\tau$ where $\phi$ is in $\cJ$.

\noindent 2a. If $S$ intersects every $\Gamma \phi^\tau$ 
where $\phi$ is in $\Hom(P,[n])$,
then since $L(n,P)$ is the Alexander dual of $L(P,n)$, the monomial
$m_S$ is in $L(n,P)$. 

\noindent 2b. If $S$ does not intersect $\Gamma \phi^\tau$
where $\phi \in \cJ^c$, then by Lemma \ref{AdJLemSupB}, for a minimal
such $\phi$ we will have $S \supseteq \La \phi^\tau$.
Since $S$ intersects $\Gamma \phi^\tau$ for all $\phi \in \cJ$, 
a minimal such $\phi$ is in $\cJ^c$. 
Thus $m_S$ is divided
by $m_{\La \phi^\tau}$ in $K(\cJ^c)$.
\end{proof}

\begin{remark} For a more concrete example, to Stanley-Reisner
ideals with whiskers, see the
end of Subsection \ref{ExCLPSubsecFace}.
\end{remark}

\begin{remark} In \cite[Thm.5.1]{DaFlNeCoLP} it is shown
that the simplicial complex corresponding to $L(n,P;\cJ)$ is a 
triangulation of a ball. Its boundary is then a triangulation
of a sphere. This gives a comprehensive generalization  of 
Bier spheres, \cite{BPSZ}. In \cite[Sec.4]{DaFlNeCoLP} there is also
a precise description of the canonical module of the Stanley-Reisner
ring of $L(n,P;\cJ)$, as an ideal in this Stanley-Reisner ring.
\end{remark}

\subsection{Regular quotients of the Alexander dual $L(n,P;\cJ)$}
We now take the Alexander dual of $L(P,n;\cJ)$ and cut it
down by a regular sequence of variable differences.
We then get a generalization of Theorem \ref{QLPThmNPR} and we prove
it in Section \ref{RegSec}.

\begin{theorem} \label{CLPThmNPRJ} Given an isotone map
$\phi: [n] \times P \pil R$ with left strict chain fibers. 
Let $\cJ$ be a poset ideal in $\Hom(P,n)$.
Then the basis $B$ (as defined before Theorem \ref{QLPThmNPR}) 
is a regular sequence for the ring
$\kr[x_{[n] \times P}]/L(n,P;\cJ)$. 
\end{theorem}

\section{Examples of regular quotients of co-letterplace ideals}
\label{ExCLPSec}

We give several examples of quotients of co-letterplace ideals
which have been studied in the literature in recent years.

\subsection{Strongly stable ideals:}
 {\it Poset ideals in $\Hom([d],[n])$.} 

\label{Subsec-CLPSS}
Elements of $\Hom([d],[n])$ are in
one to one correspondence with monomials in $\kr[x_1, \ldots, x_n]$
of degree $d$: A map $\phi$ gives the monomial 
$ \Pi_{i = 1}^d x_{\phi(i)}$. By this association, 
the poset ideals in $\Hom([d], [n])$ 
are in one to one correspondence with strongly stable ideals
in $\kr[x_1, \ldots, x_n]$ generated in degree $d$. 

Consider the projections $[d] \times [n] \mto{p_2} [n]$. 
The following is a consequence of Theorem \ref{CLPThmPosid}
and Theorem \ref{CLPThmPNRJ}.
\begin{corollary} \label{OrdCorSs}
Let $\cJ$ be a poset ideal of $\Hom([d],[n])$.
Then $L(\cJ)$ has linear resolution.
The quotient map
\[ \kr[x_{[d] \times [n]}]/ L(\cJ) \mto{\hat{p_2}} 
\kr[x_{[n]}]/L^{p_2}(\cJ)\]
is a quotient map by a regular sequence, and $L^{p_2}(\cJ)$ is the
strongly stable ideal in $\kr[x_1, \ldots, x_n]$ associated to $\cJ$. 
\end{corollary}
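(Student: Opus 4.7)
The plan is short: invoke the earlier results for the first two claims, then verify that the monomial bijection matches the Borel order.

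First, Proposition~\ref{CLPProPosid} applied with $P = [d]$ gives that $L(\cJ)$ has linear quotients, and hence linear resolution. No further work is needed here.

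Next, for the regular-sequence claim, I would apply Theorem~\ref{CLPThmPNRJ} with the isotone map $\psi = p_2 : [d] \times [n] \pil [n]$. The only hypothesis to verify is that each fiber $p_2^{-1}(j) = [d] \times \{j\}$ is a left strict chain in $[d] \times [n]^{\op}$. Since the second coordinate is constant along the fiber and the first coordinate ranges over the totally ordered set $[d]$, the fiber is a chain in $[d] \times [n]^{\op}$, and the first coordinate strictly increases along it, so the chain is left strict. Theorem~\ref{CLPThmPNRJ} then supplies the desired regular sequence of variable differences $x_{i,j} - x_{i',j}$.

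It remains to recognize $L^{p_2}(\cJ)$ as the strongly stable ideal corresponding to $\cJ$. Under $\hat{p_2}$ one has $x_{i,j} \mapsto x_j$, so the generator $m_{\Gamma \phi} = \prod_{i=1}^d x_{i,\phi(i)}$ is sent to $\prod_{i=1}^d x_{\phi(i)}$, which is precisely the monomial attached to $\phi$ by the bijection of the opening paragraph. The residual combinatorial point is that this bijection identifies poset ideals of $\Hom([d],[n])$ with Borel-closed sets of degree-$d$ monomials: a Borel move $x_k \mapsto x_i$ with $i < k$ applied to $\prod x_{\phi(j)}$ returns, after re-sorting indices, a map $\phi'$ with $\phi' \leq \phi$ pointwise, and conversely every $\phi' \leq \phi$ in $\Hom([d],[n])$ is reached from $\phi$ by iterating such moves. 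I expect this last compatibility to be the only step with any actual content; it is classical and should pose no serious obstacle.
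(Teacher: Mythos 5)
Your proof is correct and follows exactly the route the paper intends: the paper gives no explicit proof of this corollary, simply citing Proposition~\ref{CLPProPosid} and Theorem~\ref{CLPThmPNRJ} after noting the bijection between poset ideals of $\Hom([d],[n])$ and strongly stable ideals. Your verification that the fibers of $p_2$ are left strict chains and that the Borel order matches the pointwise order on $\Hom([d],[n])$ fills in the details the paper leaves implicit, and both checks are sound.
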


The ideals $L(\cJ)$ are extensively studied by Nagel and Reiner in \cite{NaRe}.
Poset ideals $\cJ$ of $\Hom([d],[n])$ are there called strongly
stable $d$-uniform hypergraphs, \cite[Def. 3.3]{NaRe}.
If $M$ is the hypergraph corresponding to $\cJ$, the ideal $L(\cJ)$
is the ideal $I(F(M))$ of the $d$-partite $d$-uniform hypergraph $F(M)$
of \cite[Def.3.4, Ex.3.5]{NaRe}.

Furthermore the ideal $L^{p_2}(\cJ)$ is the ideal $I(M)$ of 
\cite[Ex. 3.5]{NaRe}. The squarefree ideal $I(K)$ of \cite[Ex.3.5]{NaRe}
is the ideal $L^\phi(\cJ)$ obtained from the map:
\begin{align*}
\phi : [d] \times [n] & \pil [d+n-1] \\
(a,b) & \mapsto a+b-1
\end{align*}
Corollary \ref{OrdCorSs} above is a part of \cite[Thm. 3.13]{NaRe}.

\medskip
Given a sequence $0 = a_0 \leq a_1 \leq \cdots \leq a_{d-1}$, 
we get an isotone map
\begin{eqnarray*}
\alpha : [d] \times [n] && \pil [n + a_{d-1}] \\
(i,j) && \mapsto j + a_{i-1} 
\end{eqnarray*}
having left strict chain fibers. The ideal
$L^\alpha(\cJ)$ is the ideal coming from the strongly stable ideal
associated to $\cJ$ by the stable operator
of S.Murai \cite[p.707]{MuSqu}. When $a_{i-1} < a_i$
they are called alternative squarefree operators in \cite[Sec. 4]{YaAlt}.

\begin{remark}
In  \cite{FrMe} Francisco, Mermin and Schweig 
consider a poset $Q$ with underlying set $\{1,2, \ldots, n\}$ 
where $Q$ is a weakening of the natural total order, and study
$Q$-Borel ideals. This is not quite within our setting, but adds
extra structure: Isotone maps $\phi : [d] \pil [n]$ uses the total
order on $[n]$ but
when studying poset ideals $\cJ$ the weaker poset structure
$Q$ is used on the codomain.
\end{remark}

\medskip
Let $\dis{n}$ be the poset which is the  disjoint union of 
the one element posets $\{1\},\ldots \{n\}$, 
so any two distinct elements are incomparable. This is the antichain 
on $n$ elements.

\subsection{Ferrers ideals:}{\it Poset ideals in $\Hom(\dis{2}, [n])$.}
By (\ref{FacetLigPart}) partitions $\la_1 \geq \cdots \geq \la_n \geq 0$
where $\la_1 \leq n$ correspond to elements of:
\[ \Hom([n], [n+1]) \iso \Hom([n] \times [n], [2]). \]
Thus $\la$ gives a poset ideal $\cJ$ in $[n] \times [n] = \Hom(\dis{2}, [n])$. 
The Ferrers ideal $I_\la$ of \cite[Sec. 2]{CoNa1} 
is the ideal $L(\cJ)$ in $\kr[x_{\dis{2} \times [n]}]$. 
In particular we recover the result from \cite[Cor.3.8]{CoNa1} 
that it has linear resolution.

More generally, the poset ideals $\cJ$ of $\Hom(\dis{d},[n])$ 
correspond to the $d$-partite $d$-uniform Ferrers hypergraphs $F$ in
\cite[Def. 3.6]{NaRe}. That $L(\cJ)$ has linear resolution is 
\cite[Thm. 3.13]{NaRe}.

\subsection{Edge ideals of  cointerval $d$-hypergraphs} 
Let $\Hom_s(Q,P)$ be strict isotone maps $\phi$, i.e.
$q < q^{\prime}$ implies $\phi(q) < \phi(q^{\prime})$.
There is an isomorphism of posets
\begin{equation} \label{OrdLigStrict}
 \Hom([d], [n]) \iso \Hom_s([d], [n+d-1]),
\end{equation}
by sending $\phi$ to $\phi_s$ given by $\phi_s(j) = \phi(j) + j -1$. 

Consider the weaker partial order on $\ordl$ on $\Hom([d],[n])$ 
where $\phi \ordl \psi$ if $\phi(i) \leq \psi(i)$ for $i < d$
and $\phi(d) = \psi(d)$. 
Via the isomorphism (\ref{OrdLigStrict}) 
this gives a partial order $\ordls$ on $\Hom_s([d], [n+d-1])$. 
The poset ideals for the partial order $\ordls$ correspond
to the cointerval $d$-hypergraphs of \cite[Def. 4.1]{DoEn}
on the set $\{1, 2, \ldots, n+d-1 \}$. Let $\cJ_s$ be such 
a poset ideal for $\ordls$. It corresponds to a poset ideal $\cJ$
in $\Hom([d],[n])$ for $\ordl$. 
Let \begin{align}  \label{OrdLigdn}
\phi: [d] \times [n] & \pil [d+n-1] \\
 (a,b) & \mapsto a+b-1 \notag
\end{align}
The ideal  $L^\phi(\cJ)$ is the edge ideal of the cointerval hypergraph
corresponding to $\cJ_s$, see \cite[Def. 2.1]{DoEn}.
By remarks \ref{CLPRemWOrd} and \ref{RegRemWOrd},  theorems \ref{CLPThmPNRJ}
and \ref{CLPThmPosid} 
still holds for the weaker partial order $\ordl$. Hence 
we recover the fact from \cite[Cor. 4.7]{DoEn} that edge ideals of
cointerval hypergraphs have linear resolution.
In the case $d = 2$ these ideals are studied also in 
\cite[Sec. 4]{CoNa1} and \cite[Sec. 2]{NaRe}. These are obtained
by cutting down by a regular sequence of differences of variables  
from a {\it skew} Ferrers ideals $I_{\la - \mu}$. The skewness implies 
the ideal comes from a poset ideal of $\Hom([2], [n])$ rather
than $\Hom(\dis{2}, [n])$. Due to this we get the map (\ref{OrdLigdn})
which has left strict chain fibers, and so the ideal
$\overline{I_{\la - \mu}}$, of \cite[Sec. 4]{CoNa1}.

\subsection{Uniform face ideals:} \label{ExCLPSubsecFace}
{\it Poset ideals in $\Hom(\dis{n}, [2])$.}
The uniform face ideal of a simplicial complex $\Delta$, introduced
recently by D.Cook \cite{Co}, see also \cite{HeHi}, 
is the ideal generated by the monomials
\[ \underset{i \in F}{\prod} x_i \cdot \underset{i \not \in F}{\prod} y_i\]
as $F$ varies among the faces of $\Delta$. 
The Boolean poset on $n$ elements 
is the distributive lattice $D(\dis{n}) = \Hom(\dis{n},[2])$. 
A simplicial complex $\Delta$ on the set $\{1, 2, \ldots, n \}$ 
corresponds to a poset ideal $\cJ$ of $\Hom(\dis{n}, [2])$, and
the uniform face ideal of $\Delta$ identifies as the subideal $L(\cJ)$ 
of $L(\dis{n}, [2])$. 

More generally Cook considers a set of 
vertices which is a disjoint union of $k$ ordered sets 
$\cC_1 \cup \cdots \cup \cC_k$, each $\cC_i$ considered a colour class.
He then considers simplicial complexes $\Delta$ which are {\it nested}
with respect to these orders \cite[Def.3.3, Prop.3.4]{Co}. He
associates to this the {\it uniform face ideal} $I(\Delta, \cC)$,
\cite[Def. 4.2]{Co}. Let $c_i$ be the cardinality of $\cC_i$ and consider
the poset which is the disjoint union $\cup_{i = 1}^k [c_i]$. 
Then such a $\Delta$ corresponds precisely to a poset ideal $\cJ$ in
$\Hom(\cup_1^k [c_i], [2])$. In fact $\cJ$ is isomorphic to the
index poset $P(\Delta, \cC)$ of \cite[Def. 6.1]{Co}. 
The uniform face ideal is obtained as follows:
There are projection maps $p_i : [c_i] \times [2] \pil [2]$ and
so
\[ \cup_1^k p_i :  ( \cup_1^k [c_i]) \times [2] \pil \cup_1^k [2]. \]
This map has left strict chain fibers
and the ideal $L^{\cup_1^k p_i}( \cup_1^k [c_i], [2])$ is 
exactly the uniform face ideal $I(\Delta, \cC)$.
In \cite[Thm. 6.8]{Co} it is stated
that this ideal has linear resolution.

\medskip Returning again to the first case of the ideal $L(\cJ)$ in 
$L(\dis{n},[2])$, its Alexander dual is by Theorem 
\ref{CLPThmAd}:
\[L(\cJ)^A = L([2], \dis{n}) + K(\cJ^c). \]
Here $L([2],\dis{n})$ is the complete intersection of
$x_{1j}x_{2j}$ for $j = 1, \ldots, n$, while $K(\cJ^c)$ is 
generated by $\prod_{j \in G} x_{1j}$ where $G$ is a nonface of $\Delta$. 
Thus $K(\cJ^c)$ is the associated ideal
$I_\Delta \sus \kr[x_{11}, \ldots, x_{1n}]$. This is \cite[Thm.1.1]{HeHi}:
$L(\cJ)^A$ is the Stanley-Reisner ideal $I_\Delta$ with whiskers
$x_{1j}x_{2j}$. 

\section{Proof concerning Alexander duality}
\label{AdSec}

In this section  we prove Theorem \ref{QLPThmAd} concerning
the compatibility between Alexander duality and cutting
down by a regular sequence.
The following lemma holds for squarefree ideals. Surprisingly
it does not hold for monomial ideals in general, for
instance for $(x_0^n, x_1^n)\sus k[x_0,x_1]$. 

\begin{lemma} \label{PrAdLemNulldiv}
Let $I \sus S$ be a squarefree monomial ideal and let $f \in S$
be such that $x_1 f = x_0 f$ considered in $S/I$. 
Then for every monomial $m$ in $f$ we have 
$x_1m = 0 = x_0m$ in $S/I$.
\end{lemma}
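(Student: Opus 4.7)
The plan is to exploit the minimal primary decomposition of the squarefree ideal, $I = \bigcap_{F} P_F$, where $F$ ranges over the facets of the Stanley--Reisner complex $\Delta$ of $I$ and $P_F = (x_i : i \not\in F)$ is the associated monomial prime. The idea is to test the hypothesis $(x_1 - x_0) f \in I$ against each $P_F$ individually, using primality of $P_F$, and then reassemble.

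The first step is to determine which of the primes $P_F$ contain the linear form $x_1 - x_0$. Since $S/P_F$ is the polynomial ring on the variables indexed by $F$, a direct inspection shows that $x_1 - x_0 \in P_F$ if and only if both $x_0 \not\in F$ and $x_1 \not\in F$. Equivalently, for every facet $F$ that contains at least one of $x_0, x_1$, the image of $x_1 - x_0$ in $S/P_F$ is a nonzero linear form, so $x_1 - x_0 \not\in P_F$.

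For each such facet $F$, combining $(x_1 - x_0) f \in I \sus P_F$ with primality of $P_F$ forces $f \in P_F$. Since $P_F$ is a monomial ideal, this is equivalent to saying that every monomial $m$ appearing in $f$ satisfies $\supp(m) \not\subseteq F$ for each facet $F$ containing $x_0$ or $x_1$.

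To finish, I would verify $x_1 m \in I$ for every monomial $m$ of $f$ as follows: if some facet $F$ contained $\supp(x_1 m) = \supp(m) \cup \{x_1\}$, then $x_1 \in F$, so by the previous paragraph $\supp(m) \not\subseteq F$, contradicting $\supp(m) \subseteq F$. Hence no facet contains $\supp(x_1 m)$, so $x_1 m \in P_F$ for all $F$, and therefore $x_1 m \in I$; the argument for $x_0 m$ is symmetric. The only delicate point, and the essential use of squarefreeness, is that it furnishes a primary decomposition into honest primes; the non-squarefree counterexample $(x_0^n, x_1^n)$ in the remark fails precisely because its primary components are powers of $(x_0, x_1)$ rather than primes, so the primality step collapses.
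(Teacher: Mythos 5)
Your proof is correct and takes a genuinely different route from the paper's. The paper argues directly and inductively: it expands $f$ in powers of $x_0$, isolates the top $x_0$-degree component of $(x_1-x_0)f$ to conclude $x_0^{a+1}f_a\in I$, uses squarefreeness to drop the extra power and get $x_0 f_a\in I$, iterates down to $x_0 f\equiv 0$ and hence $x_1 f\equiv 0$ in $S/I$, and finally reads off the per-monomial conclusion from multihomogeneity of $I$. You instead invoke the Stanley--Reisner minimal prime decomposition $I=\bigcap_F P_F$ and push the hypothesis through each prime: for facets $F$ meeting $\{0,1\}$ you use $x_1-x_0\notin P_F$ and primality to conclude $f\in P_F$, so $\supp(m)\not\subseteq F$ for each monomial $m$ of $f$, and then the support combinatorics forces $x_0 m, x_1 m\in P_F$ for every facet $F$. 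Both work; the paper's argument is elementary and self-contained, while yours packages the role of squarefreeness once and for all into the fact that the minimal components are honest primes, which, as you observe, is also the cleanest explanation for why the non-squarefree example $(x_0^n,x_1^n)$ escapes the lemma. One cosmetic note: you write things like $x_0\notin F$ and $\supp(m)\cup\{x_1\}$ where facets and supports live in the index set, so the indices $0,1$ are what is meant, but the intent is unambiguous.
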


\begin{proof}
Write $f = x_0^a f_a + \cdots x_0 f_1 + f_0$
where each $f_i$ does not contain $x_0$. 
The terms in $(x_1- x_0) f = 0$ of degree $(a+1)$ in $x_0$,
are in $x_0^{a+1} f_a$, and so this is zero. Since $S/I$ is 
squarefree, $x_0 f_a$ is zero, and so $f = x_0^{a-1}f_{a-1} + \cdots $. 
We may continue and get $f = f_0$. But then again in
$(x_1 - x_0) f = 0$ the terms with $x_0$ degree $1$ is $x_0f_0$
and so this is zero. The upshot is that 
$x_0f = 0 = x_1f$. But then each of the multigraded terms
of these must be zero, and this gives the conclusion.
\end{proof}

Let $S$ be the polynomial ring $\kr[x_0, x_1, x_2, \ldots, x_n]$ and 
$I \sus S$
a squarefree monomial ideal with Alexander dual $J \sus S$. 
Let $S_1 = k[x,x_2, \ldots, x_n]$ and $S \pil S_1$ be the map
given by $x_i \mapsto x_i$ for $i \geq 2$ and $x_0,x_1 \mapsto x$. 

Let $I_1$ be the ideal of $S_1$ which is the image of $I$,
so the quotient ring of $S/I$ by the element $x_1 - x_0$ is the
ring $S_1 /I_1$. Similarly we define $J_1$. 
We now have the following. Part c. below is Theorem 3.1 in
the unpublished paper \cite{Loh}. 

\begin{proposition} \label{QLPProAlexDual}
a) If $x_1 - x_0$ is a regular element
of $S/I$, then $J_1$ is squarefree.

b) If $I_1$ is squarefree then $x_1 - x_0$ is a regular element
on $S/J$. 

c) If both $x_1 -x_0$ is a regular element on $S/I$ and $I_1$
is squarefree, then $J_1$ is the Alexander dual of $I_1$. 
\end{proposition}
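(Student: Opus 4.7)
The plan is to reduce parts (a) and (b) to an elementary observation about minimal primes and generators, and then attack part (c) by passing to Stanley--Reisner complexes.

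For (a), the key remark is that since $I$ is squarefree, its associated primes coincide with its minimal primes, all of the form $\mathfrak p_A = (x_a : a \in A)$. Such a prime contains $x_1-x_0$ iff $\{0,1\} \sus A$, so regularity of $x_1 - x_0$ on $S/I$ is equivalent to the condition that every minimal prime set $A$ contains at most one of $\{0,1\}$. The generators of $J = I^A$ are the monomials $m_A$ for these sets $A$, so their images $\bar m_A \in S_1$ are squarefree precisely under that condition; hence $J_1$ is squarefree.

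For (b), I first observe that $I_1$ is squarefree iff no minimal generator of $I$ contains both $x_0$ and $x_1$. This follows from a minimality argument: if $g = x_0 x_1 h$ were a minimal generator of $I$, then no other generator of $I$ can have image dividing $\pi(g) = x^2 h$ properly in $S_1$ (such a generator would itself properly divide $g$ in $S$), so $x^2 h$ would be a non-squarefree minimal generator of $I_1$. By Alexander duality the minimal generators of $I = J^A$ correspond to the minimal prime sets of $J$, so the above condition is equivalent to ``no minimal prime of $J$ contains both $x_0,x_1$'', which by the reasoning of (a) is equivalent to regularity of $x_1 - x_0$ on $S/J$.

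For (c), I work with the Stanley--Reisner complexes $\Delta$ and $\Delta^\vee$ of $I$ and $J$. The hypotheses translate to: every facet of $\Delta$ contains $x_0$ or $x_1$ (regularity on $S/I$), and every facet of $\Delta^\vee$ contains $x_0$ or $x_1$ (equivalently, no minimal nonface of $\Delta$ contains both $x_0, x_1$; this is $I_1$ squarefree). The desired identity $J_1 = (I_1)^A$ amounts to the equality of simplicial complexes $(\Delta^\vee)_1 = (\Delta_1)^\vee$ on $\{x, x_2, \ldots, x_n\}$. A routine computation describes the faces of $\Delta_1$: for $F \sus \{x, x_2, \ldots, x_n\}$, $F \in \Delta_1$ iff either $x \notin F$ and $F \in \Delta$, or $x \in F = \{x\} \cup F_0$ and both liftings $\{x_0\} \cup F_0$ and $\{x_1\} \cup F_0$ lie in $\Delta$. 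I then unwind $F \in (\Delta_1)^\vee$ and $F \in (\Delta^\vee)_1$ by cases on whether $x \in F$.

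Each case reduces to an equivalence in $\Delta$: if $x \in F$, one needs $F^c \notin \Delta \Leftrightarrow \{x_\epsilon\} \cup F^c \notin \Delta$ for both $\epsilon = 0, 1$; if $x \notin F$ and $G = \{x_2, \ldots, x_n\} \setminus F$, one needs $\{x_0, x_1\} \cup G \notin \Delta \Leftrightarrow \{x_\epsilon\} \cup G \notin \Delta$ for at least one $\epsilon$. The direction ``the larger set being a nonface forces the smaller to be'' is immediate, since supersets of nonfaces are nonfaces. The first case's other direction uses regularity: if $F^c \in \Delta$, it lies in some facet of $\Delta$, which by hypothesis contains some $x_\epsilon$, giving $\{x_\epsilon\} \cup F^c \in \Delta$. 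The main obstacle is the other direction of the second case, which uses the $I_1$-squarefree hypothesis. If $\{x_0, x_1\} \cup G$ is a nonface of $\Delta$, it cannot be minimal (minimal nonfaces of $\Delta$ contain at most one of $\{x_0, x_1\}$), so it strictly contains a minimal nonface $\sigma$; choosing $\epsilon \in \{0,1\}$ with $x_\epsilon \notin \sigma$, one has $\sigma \sus \{x_\epsilon\} \cup G$, making the latter a nonface as required.
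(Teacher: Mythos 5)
Your proposal is correct, and it takes a genuinely different route from the paper's. For parts (a) and (b), the paper argues directly: (a) is a quick observation that a facet avoiding both $0$ and $1$ would be killed by both $x_0$ and $x_1$, contradicting regularity; (b) relies on Lemma~\ref{PrAdLemNulldiv} to reduce $(x_1-x_0)f=0$ to a statement about individual monomials and then reasons about non-trivial common factors. You instead package both parts through the standard characterization ``for a radical ideal, $z$ is a non-zerodivisor iff $z$ lies in no minimal prime,'' combined with the observation that a variable-generated prime $\pp_A$ contains $x_1-x_0$ exactly when $\{0,1\}\subseteq A$, and with the involutivity of Alexander duality to swap minimal primes of one ideal for minimal generators of the other. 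This makes (a) and (b) visibly dual to one another (and in fact upgrades both to equivalences), at the cost of invoking the minimal-prime criterion, whereas the paper stays within elementary monomial manipulations. For (c) the paper works at the level of monomials and non-trivial common factors, using an lcm argument and one more application of regularity, while you translate everything into faces of $\Delta$ and $\Delta^\vee$: you describe $\Delta_1$ explicitly in terms of $\Delta$, then verify $(\Delta_1)^\vee=(\Delta^\vee)_1$ by a case analysis on whether $x\in F$, with the two nontrivial implications coming precisely from the two hypotheses (facets of $\Delta$ contain $x_0$ or $x_1$; minimal nonfaces of $\Delta$ contain at most one of them). Your simplicial reformulation isolates cleanly where each hypothesis enters and avoids Lemma~\ref{PrAdLemNulldiv} altogether; the paper's version has the advantage of staying uniformly in the monomial-ideal language used throughout the rest of the paper.
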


\begin{proof}
The Alexander dual $J$ of $I$ consists of all monomials
in $S$ with non-trivial common factor (ntcf.) with all monomials in $I$. 

a) Let $F$ be a facet of the simplicial complex of $I$.
Let $m_F = \prod_{i \in F} x_i$. Suppose $F$ does not contain
any of the vertices $0$ and $1$. Then $x_1m_F = 0 = x_0 m_F$ in $S/I$ (since $F$
is a facet). Since $x_1 - x_0$ is regular we get $m_F = 0$ in $S/I$,
a contradiction. Thus every facet $F$ contains either $0$ or $1$. 
The generators of $J$ are $\prod_{i \in [n] \backslash F} x_i$,
and so no such monomial contains $x_0x_1$ and therefore
$J_1$ will be squarefree.

b) Suppose $(x_1 - x_0)f = 0$ in $S/J$. By the above
for the monomials $m$ in $f$, we have 
$x_1 m = 0 = x_0m$ in $S/J$. We may assume $m$ is squarefree.
So $x_0m$ has ntcf. with all monomials in $I$ and the same goes
for $x_1m$. 
If $m$ does not have ntcf. with the minimal monomial generator
$n$ in $I$, we must then have $n = x_0x_1n^\prime$. 
But then the image of $n$ in 
$I_1$ would not be squarefree, contrary to the assumption.
The upshot is that $m$ has ntcf. with every monomial in $I$
and so is zero in $S/J$. 

c) A monomial $m$ in $J$ has ntcf. with all monomials in $I$.
Then its image $\ovm$ in $S_1$ has ntcf. with all monomials in $I_1$, and
so $J_1$ is contained in the Alexander dual of $I_1$. 

Assume now $\ovm$ in $S_1$ has ntcf. with all monomials in $I_1$. 
We must show that $\ovm \in J_1$.
If $\ovm$ does not contain $x$ then $m$ has ntcf. with every monomial
in $I$, and so $\ovm \in J_1$. 

Otherwise $\ovm = x \ovmp$ and so $\ovm \in J_1$. We will show that either
$x_0m^\prime$ or $x_1 m^\prime$ is in $J$. 
If not, then $x_0m^\prime$ has no common factor with some monomial
$x_1n_1$ in $I$
(it must contain $x_1$ since $\ovm$ has ntcf. with every monomial in 
$I_1$), and $x_1m^\prime$ has no common factor with
some monomial $x_0 n_0$ in $I$. Let $n$ be the least common
multiple of $n_0$ and $n_1$. Then $x_0n$ and $x_1n$
are both in $I$ and so by the regularity assumption $n \in I$. 
But $n$ has no common factor with $x_0m^\prime$ and $x_1 m^\prime$,
and so $\ov{n} \in I_1$ has no common factor with $\ovm = x \ovmp$.
This is a contradiction. Hence either $x_0m^\prime$ or $x_1m^\prime$ is in
$J$ and so $\ovm$ is in $J_1$. 
\end{proof}


We are ready to round off this section by the following extension of
Theorem \ref{QLPThmAd}.

\begin{theorem} \label{QLPThmAdJ}
Let $\phi : [n] \times P \pil R$ be an isotone map such that
the fibers $\phi^{-1}(r)$ are bistrict chains in $[n] \times P^\op$.
Then the ideals $L^\phi(n,P;\cJ)$ and $L^{\phi^\tau}(P,n;\cJ)$ are
Alexander dual.
\end{theorem}

\begin{proof}
Since $L(P,n)$ is squarefree, the subideal $L(P,n;\cJ)$ is also.
Furthermore it is obtained by cutting down by a regular sequence of 
variable differences, by Theorem \ref{CLPThmNPRJ}.
Hence $L(n,P;\cJ)^\phi$ is the Alexander dual of $L(P,n;\cJ)^{\phi^\tau}$.
\end{proof}

\section{Proof that the poset maps induce regular sequences.}
\label{RegSec}
To prove Theorems \ref{QLPThmNPR}, \ref{QLPThmPNR} and
\ref{CLPThmPNRJ},
we will use an induction argument.
Let $[n] \times P \mto{\phi} R$ be an isotone map. 
Let $r \in R$ have inverse image by $\phi$ of cardinality $\geq 2$. 
Choose a partition into nonempty subsets 
$\phi^{-1}(r) = R_1  \cup R_2$ such that  $(i,p) \in R_1$ 
and $(j,q) \in R_2$ implies $i < j$. 
Let $R^\prime$ be $R \backslash \{ r \} \cup \{ r_1, r_2 \}$. 
We get the map 
\begin{equation} 
\label{QLPLigFactphi} [n] \times P \mto{\phi^\prime} R^\prime \pil R
\end{equation}
factoring $\phi$, where the elements of $R_i$ map to $r_i$. 
For an element $p^\prime$ of $R^\prime$, denote by $\ov{p^\prime}$
its image in $R$. Let $p^\prime, q^\prime$ be distinct elements of $R^\prime$. 
We define a partial order on $R^\prime$  by the following
two types of strict inequalities:

\begin{itemize}
\item[a.] $p^\prime < q^\prime$ if $p^\prime = r_1$ and $q^\prime = r_2$, 
\item[b.] $p^\prime < q^\prime$ if $\ov{p^\prime} < \ov{q^\prime}$
\end{itemize}

\begin{lemma}
This ordering is a partial order on $R^\prime$. 
\end{lemma}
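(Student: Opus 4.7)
The plan is to verify directly that the described relation satisfies the three defining properties of a strict partial order on $R^\prime$: irreflexivity, antisymmetry, and transitivity. Irreflexivity is immediate, since the only comparabilities declared are $r_1 < r_2$, where $r_1 \neq r_2$ by construction, and $p^\prime < q^\prime$ whenever $\ov{p^\prime} < \ov{q^\prime}$, which can never be satisfied by $p^\prime = q^\prime$ because $R$ is itself a poset.

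For transitivity, given $p^\prime < q^\prime$ and $q^\prime < s^\prime$, I would split into four cases according to which of the two generating rules produces each inequality. The case where both come from the first rule does not arise, since $q^\prime$ would need to equal both $r_2$ and $r_1$. If both come from the second rule, transitivity of $<$ on $R$ yields $\ov{p^\prime} < \ov{s^\prime}$, and the second rule delivers $p^\prime < s^\prime$. In the two mixed cases, the observation $\ov{r_1} = \ov{r_2} = r$ is what makes things work: a first-rule step at either end can be absorbed into the adjacent second-rule step, since the image in $R$ is the same for $r_1$ and $r_2$. For instance, if $p^\prime = r_1 < r_2 = q^\prime$ by rule~(i) and $\ov{q^\prime} < \ov{s^\prime}$ by rule~(ii), then $\ov{p^\prime} = r = \ov{q^\prime} < \ov{s^\prime}$, and rule~(ii) yields $p^\prime < s^\prime$; the symmetric case is handled the same way. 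Antisymmetry then follows formally from irreflexivity and transitivity.

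The only potential pitfall is a hidden cycle produced by alternating rules, but the structural point above shows that any finite chain of declared relations collapses, via the identification $\ov{r_1} = \ov{r_2} = r$, to a single comparison in the original poset $R$. I do not anticipate a genuine obstacle. I note also that the index-monotonicity condition in the partition $\phi^{-1}(r) = R_1 \cup R_2$ (that $(i,p) \in R_1$ and $(j,q) \in R_2$ force $i < j$) is not needed for this lemma; it will only enter later, when one checks that the factored map $\phi^\prime$ of (\ref{QLPLigFactphi}) still has left strict chain fibers, making the inductive reduction of Theorems~\ref{QLPThmNPR} and \ref{QLPThmPNR} available.
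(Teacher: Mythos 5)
Your proof is correct and follows essentially the same route as the paper: a direct case analysis of transitivity in which a rule-(i) step is absorbed into an adjacent rule-(ii) step via $\ov{r_1}=\ov{r_2}=r$, the only cosmetic difference being that you verify irreflexivity and transitivity of the strict relation and deduce antisymmetry, whereas the paper checks transitivity and antisymmetry of the associated non-strict order directly. Your side remark that the index condition on $R_1, R_2$ is not needed here (only later, to preserve left strict chain fibers) is also accurate.
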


\begin{proof}
Transitivity:
Suppose $p^\prime \leq q^\prime$ and $q^\prime \leq r^\prime$.
Then $\ov{p^\prime} \leq \ov{q^\prime}$ and $\ov{q^\prime} \leq \ov{r^\prime}$
and so $\ov{p^\prime} \leq \ov{r^\prime}$. If either $\ov{p^\prime}$ or
$\ov{r^\prime}$  is distinct
from $r$ we conclude that $p^\prime \leq r^\prime$. 
If both of them are equal to $r$, then $\ov{q^\prime} = r$ also. Then  either
$p^\prime = q^\prime = r^\prime$ or $p^\prime = r_1$ and $r^\prime = r_2$, 
and so $p^\prime \leq r^\prime$.

\medskip
Reflexivity: Suppose $p^\prime \leq q^\prime$ and $q^\prime \leq p^\prime$. 
Then $\ov{p^\prime} = \ov{q^\prime}$. If this is not $r$ we get $p^\prime
= q^\prime$. If it equals $r$, then since we do not have $r_2 \leq r_1$,
we must have again have $p^\prime = q^\prime$.
\end{proof}




\begin{proof}[Proof of Theorem \ref{QLPThmNPR}]
We show this by induction on the cardinality of $\im \phi$. Assume that 
we have a factorization (\ref{QLPLigFactphi}), such that
\begin{equation} \label{QLPLigKxR}
\kr[x_{R^\prime}]/ L^{\phi^\prime}(n,P)
\end{equation}
is obtained by cutting down
from $\kr[x_{[n] \times P}]/L(n,P)$ by a regular sequence
of variable differences. 

For $(a,p)$ in $[n] \times P$ denote its image in $R^\prime$ by $\ov{a,p}$ and
its image in $R$ by $\ovv{a,p}$. 
Let $(a,p)$ map to $r_1 \in R^\prime$ and $(b,q)$ map to $r_2 \in R^\prime$. 
We will show that $x_{r_1} - x_{r_2}$ is a regular element in
the quotient ring 
(\ref{QLPLigKxR}). 
So let $f$ be a polynomial of this quotient ring such that
$f(x_{r_1} - x_{r_2}) = 0$. 
Then by Lemma \ref{PrAdLemNulldiv}, for any monomial $m$ in $f$
we have $m x_{r_1} = 0 = m x_{r_2}$ in the quotient ring (\ref{QLPLigKxR}). 
We may assume $m$ is nonzero in this quotient ring.

There is a monomial 
$x_{\ov{1,p_1}} x_{\ov{1,p_2}} \cdots x_{\ov{n,p_n}}$ in $L^{\phi^\prime}(n,P)$
dividing $m x_{\ov{a,p}}$ considered as monomials in $\kr[x_R]$. 
Then we must have $\ov{a,p} = \ov{s,p_s}$ for some $s$. 
Furthermore there is $x_{\ov{1,q_1}} x_{\ov{1,q_2}} \cdots x_{\ov{n,q_n}}$
in $L^{\phi^\prime}(n,P)$
dividing $m x_{\ov{b,q}}$ in $\kr[x_R]$, and so
$\ov{b,q} = \ov{t, q_t}$ for some $t$. 

In $R$ we now get 
\[ \ovv{s,p_s} = \ovv{a,p} = \ovv{b,q} = \ovv{t,q_t}, \]
so $s = t$ would imply $q_t = p_s$ since $\phi$ has left strict chain fibers. 
But then
\[ r_1 = \ov{a,p} = \ov{s,p_s} = \ov{t,q_t} = \ov{b,q} = r_2 \]
which is not so.
Assume, say $s < t$. Then $p_s \geq q_t$ 
since $\phi$ has left strict chain fibers,
and so 
\[ p_t \geq p_s \geq q_t \geq q_s. \] 

\medskip
1. Suppose $p_s > q_t$. Consider 
$x_{\ov{1,q}} \cdots x_{\ov{t-1,q_{t-1}}}$. This will divide $m$
since $x_{\ov{1,q_1}} x_{\ov{1,q_2}} \cdots x_{\ov{n,q_n}}$ divides
$m x_{\ov{t,q_t}}$.
Similarly $x_{\ov{s+1,p_{s+1}}} \cdots x_{\ov{n,p_n}}$ divides $m$. 
Chose $s \leq r \leq t$. Then $p_r \geq p_s > q_t \geq q_r$
and so $\ov{r,q_r} < \ov{r,p_r}$. 
Then $x_{\ov{1,q}} \cdots x_{\ov{r-1,q_{r-1}}}$ and
$x_{\ov{r,p_{r}}} \cdots x_{\ov{n,p_n}}$ do not have a common
factor since 
\[ \ov{i,q_i} \leq \ov{r,q_r}  < \ov{r,p_r} \leq \ov{j,p_j} \]
for $i \leq r \leq j$.  Hence the product of these monomials will divide
$m$ and so $m = 0$ in the quotient ring $\kr[x_R] / L^\phi(n,P)$. 

\medskip
2. Assume $p_s = q_t$ and $q_t > q_s$. Then $\ov{s,p_s} > \ov{s,q_s}$
since $\phi$ has left strict chain fibers. 
The monomials 
$x_{\ov{1,q_{1}}} \cdots x_{\ov{s,q_s}}$ and 
$x_{\ov{s+1,p_{s+1}}} \cdots x_{\ov{n,p_n}}$ then do not have
any common factor, and the product divides $m$, showing that
$m = 0$ in the quotient ring $\kr[x_R] / L^\phi(n,P)$.

If $p_t > p_s$ we may argue similarly.

\medskip
3. Assume now that $p_t = p_s = q_t = q_s$, and denote this element as $p$.
Note that for $s \leq i \leq t$ we then have $p_s \leq p_i \leq p_t$,
so $p_i = p$, and the same argument shows that $q_i = p$ for $i$
in this range.

Since 
\[ \ov{s,p} = \ov{s,p_s} = \ov{a,p} \neq \ov{b,q} = \ov{t,q_t} = \ov{t,p}\]
there is $s \leq r \leq t$ such that 
\[ \ov{s,p_s} = \cdots = \ov{r,p_r} < \ov{r+1, p_{r+1}} 
\leq \cdots \leq \ov{t,p_t}.
\]
This is the same sequence as
\[ \ov{s,q_s} = \cdots = \ov{r,q_r} < \ov{r+1, q_{r+1}} \leq \cdots 
\leq \ov{t,q_t}. \]
Then $x_{\ov{1,q_{1}}} \cdots x_{\ov{r,q_r}}$ and 
$x_{\ov{r+1,p_{r+1}}} \cdots x_{\ov{n,p_n}}$ divide $m$ and do not have
a common factor, and so $m = 0$ in the quotient ring $\kr[x_R] / L^\phi(n,P)$.
\end{proof}

\begin{proof}[Proof of Theorem \ref{CLPThmNPRJ}]
There is a surjection
\begin{equation} \label{ProofNPRLig}
\kr[x_{[n] \times P}]/L(n,P) \pil \kr[x_{[n] \times P}]/L(\cJ)^A 
\end{equation}
and both are Cohen-Macaulay quotient rings of $\kr[x_{[n] \times P}]$
of codimension $|P|$. The basis $B$ is a regular sequence for the
first ring by the previous argument. Hence
\[ \kr[x_{[n] \times P}]/(L(n,P) + (B))\]
has codimension $|P| + |B|$. Since 
\[ \kr[x_{[n] \times P}]/(L(\cJ)^A + (B))\]
is a quotient of this it must have codimension
$\geq |P| + |B|$. But then $B$ must be a regular sequence for
the right side of (\ref{ProofNPRLig}) also.
\end{proof}

\begin{proof}[Proof of Theorems \ref{QLPThmPNR} and \ref{CLPThmPNRJ}]
By induction on the cardinality of $\im \phi$. We assume we have a
factorization 
\begin{equation*} 
P \times [n] \mto{\phi^\prime} R^\prime \pil R
\end{equation*}
analogous to 
(\ref{QLPLigFactphi}), such that
\begin{equation} \label{QLPLigKxR2}
\kr[x_{R^\prime}]/ L^{\phi^\prime}(\cJ)
\end{equation}
is obtained by cutting down
from $\kr[x_{P \times [n]}]/L(\cJ)$ by a regular sequence
of variable differences. 

Let $(p_0,a)$ map to $r_1 \in R^\prime$ and $(q_0,b)$ map to $r_2 \in R^\prime$. 
We will show that $x_{r_1} - x_{r_2}$ is a regular element in
the quotient ring 
(\ref{QLPLigKxR2}). 
So let $f$ be a polynomial of this quotient ring such that
$f(x_{r_1} - x_{r_2}) = 0$. 
Then by Lemma \ref{PrAdLemNulldiv}, for any monomial $m$ in $f$
we have $m x_{r_1} = 0 = m x_{r_2}$ in the quotient ring 
$\kr[x_{R^\prime}]/ L^{\phi^\prime}(\cJ)$.
We may assume $m$ is nonzero in this quotient ring.

There is $i \in \cJ \sus \Hom(P,n)$ such that the
monomial $m^i = \prod_{p \in P} x_{\ov{p,i_p}}$ in 
$L^{\phi^\prime}(\cJ)$ divides $m x_{\ov{p_0,a}}$,
and similarly a $j \in \cJ$ such that the monomial
$m^j = \prod_{p \in P} x_{\ov{p,j_p}}$ divides $m x_{\ov{q_0,b}}$.
Hence there are $s$ and $t$ in $P$ such that
$\ov{s,i_s} = \ov{p_0,a}$ and $\ov{t,j_t} = \ov{q_0,b}$. 
In $R$ we then get:
\[ \ovv{s,i_s} = \ovv{p_0,a} = \ovv{q_0,b} = \ovv{t,j_t}, \]
so $s = t$ would imply $i_t = j_t$ since $\phi$ has left strict chain fibers.
But then
\[ r_1 = \ov{p_0,a} = \ov{s,i_s} = \ov{t,j_t} = \ov{q_0,b} = r_2 \]
which is not so.
Assume then, say $s < t$.
Then $i_s \geq j_t$ since $\phi$ has left strict chain fibers, and so
\begin{equation}
\label{ProofLigst}
i_t \geq i_s \geq j_t \geq j_s.
\end{equation}

Now form the monomials
\begin{itemize}
\item $m^i_{> s} = \prod_{p > s} x_{\ov{p,i_p}}$.
\item $m^i_{i > j} = \underset{i_p > j_p, not \, (p > s)}{\prod} x_{\ov{p, i_p}}$.
\item $m^i_{i < j} = \underset{i_p < j_p, not \, (p > s)}{\prod} x_{\ov{p, i_p}}$.
\item $m^i_{i = j} = \underset{i_p = j_p, not \, (p > s)}{\prod} x_{\ov{p, i_p}}$.
\end{itemize}
Similarly we define $m^j_*$ for the various subscripts $*$. 
Then 
\[ m^i = m^i_{i=j} \cdot m^i_{i > j} \cdot m^i_{i < j} \cdot m^i_{>s} \]
divides $x_{\ov{s, i_s}} m$, and
\[ m^j = m^j_{i=j} \cdot m^j_{i > j} \cdot m^j_{i < j} \cdot m^j_{>s} \]
divides $x_{\ov{t, j_t}} m$.

There is now a map $\ell : P \pil [n]$ defined by
\[ \ell(p) = \begin{cases} i_p \, \mbox{ for } p > s \\
                       \min(i_p, j_p) \, \mbox{ for not } (p > s)
          \end{cases}.
\]
This is an isotone map as is easily checked. Its associated monomial
is 
\[ m^\ell = m_{i=j} \cdot m^j_{i > j} \cdot m^i_{i < j} \cdot m^i_{>s}. \]
We will show that this divides $m$. Since the isotone map $\ell$ is
$\leq$ the isotone map $i$, this will prove the theorem.

\medskip
\begin{claim} $m^j_{i> j}$ is relatively prime to 
$m^i_{i<j}$ and $m^i_{>s}$. 
\end{claim}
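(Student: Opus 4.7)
The plan is to prove both relative primalities by contradiction: if a variable were shared, an identification $\ov{p,j_p}=\ov{p',i_{p'}}$ would hold in $R'$, placing $(p,j_p)$ and $(p',i_{p'})$ in a common fiber of $\phi$ (since $\phi'$ inherits left strict chain fibers from $\phi$, as subsets of left strict chains are again left strict chains). This fiber condition, together with isotonicity of $i$ and $j$, is then expected to contradict the defining conditions of each factor.

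For the primality of $m^j_{i>j}$ with $m^i_{i<j}$, first rule out $p=p'$, since this would force $j_p=i_p$, contradicting $i_p>j_p$. For $p\neq p'$, left strictness of the fiber in $P\times[n]^\op$ forces either $p<p'$ (and then the chain inequality $(p,j_p)\leq (p',i_{p'})$ in $P\times[n]^\op$ yields $j_p\geq i_{p'}$) or $p>p'$ (analogously $i_{p'}\geq j_p$). In the first subcase, isotonicity of $i$ gives $i_p\leq i_{p'}\leq j_p<i_p$, a contradiction; in the second, isotonicity of $j$ gives $i_{p'}<j_{p'}\leq j_p\leq i_{p'}$, a contradiction.

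For the primality of $m^j_{i>j}$ with $m^i_{>s}$, the defining conditions are $i_p>j_p$ and not $p>s$ on one side, and $p'>s$ on the other. The case $p=p'$ contradicts the two $s$-conditions directly, and the case $p>p'$ is excluded by transitivity: $p>p'>s$ would force $p>s$. The remaining case $p<p'$ yields, as before, the contradiction $i_p\leq i_{p'}\leq j_p<i_p$. The main obstacle throughout is extracting the second-coordinate inequality $a\geq a'$ from an identification $\ov{p,a}=\ov{p',a'}$ in $R'$ with $p<p'$, which is exactly the statement that the fiber is a chain in $P\times[n]^\op$; once this is in hand, the isotonicity of $i$ and $j$ supplies the contradictions cleanly.
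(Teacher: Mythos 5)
Your proof is correct and follows essentially the same route as the paper: use the identification of variables to place the two index pairs in a common left strict chain fiber, split into cases according to the comparability of the first coordinates, and derive a contradiction from the second-coordinate inequality supplied by the chain in $P\times[n]^{\op}$ together with isotonicity of $i$ and $j$. The only difference is that you spell out the degenerate cases $p=p'$ and $p>p'$ explicitly, which the paper handles implicitly.
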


\begin{proof}
Let $x_{\ov{p, j_p}}$ be in $m^j_{i > j}$. 

1. Suppose it equals the variable
$x_{\ov{q, i_q}}$ in $m^i_{ i < j}$. Then $p$ and $q$ are comparable
since $\phi$ has left strict chain fibers.
If $p < q$ then $j_p \geq i_q \geq i_p$, contradicting $i_p > j_p$. 
If $q < p$ then $i_q \geq j_p \geq j_q$ contradicting $i_q < j_q$.

2 Suppose $x_{\ov{p, j_p}}$ equals $x_{\ov{q, i_q}}$ in $m^i_{> s}$. 
Then $p$ and $q$ are comparable and so $p < q$ since $q > s$ and we do not
have $p > s$. Then $j_p \geq i_q \geq i_p$ contradicting $i_p > j_p$. 
\end{proof}

Let $abc =  m^i_{i=j} \cdot m^i_{i < j} \cdot m^i_{>s}$ which divides
$m x_{\ov{s,i_s}}$ and $a b^\prime = m^j_{i=j} \cdot m^j_{i > j}$
which divides $m$ since $x_{\ov{t,j_t}}$ is a factor of $m^j_{>s}$
since $t > s$. 
Now if the product of monomials $abc$ divides the monomial $n$ 
and $ab^\prime$ also divides $n$, and $b^\prime$ is relatively prime to $bc$,
then the least common multiple $abb^\prime c$ divides $n$.
We thus see that the monomial associated to the isotone map $\ell$
\[ m^\ell = m_{i=j} \cdot m^j_{i > j} \cdot m^i_{i < j} \cdot m^i_{>s} \]
divides $m x_{\ov{s,i_s}}$. 
We need now only show that the variable $x_{\ov{s,i_s}}$ occurs to
a power in the above  product for $m^\ell$ less than or equal to that of its
power in $m$. 

\begin{claim} $x_{\ov{s,i_s}}$ is not a factor of
$m^j_{i > j}$ or $m^i_{i < j}$. 
\end{claim}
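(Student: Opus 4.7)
The plan is to argue by contradiction in each of the two cases, combining the left strict chain fiber hypothesis on $\phi$ with the isotonicity of $i, j \in \cJ$ and the inequality $i_s \geq j_t$ from (\ref{ProofLigst}). Since fibers of $\phi^\prime$ sit inside fibers of $\phi$, they remain left strict chains in $P \times [n]^{op}$, so any identification $\ov{s,i_s} = \ov{p,j_p}$ or $\ov{s,i_s} = \ov{p,i_p}$ either already coincides as an element of $P \times [n]$, or else forces a strict $P$-comparison between $s$ and $p$, with the $[n]$-coordinate inequality reversed.

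For the first assertion, suppose $x_{\ov{s,i_s}}$ divides $m^j_{i>j}$, witnessed by some $p$ with $i_p > j_p$, not $(p > s)$, and $\ov{s,i_s} = \ov{p,j_p}$. If $(s, i_s) = (p, j_p)$ as elements of $P \times [n]$, then $s = p$ and $i_s = j_s$, contradicting $i_s = i_p > j_p = j_s$. Otherwise left strictness yields either $s < p$ or $p < s$, and the hypothesis not $(p > s)$ rules out $s < p$. Thus $p < s$, so in $P \times [n]^{op}$ we have $j_p \geq i_s$. Combining with $i_s \geq j_t$ and $j_p \leq j_t$ (isotonicity of $j$, since $p < s < t$) forces $j_p = j_t = i_s$, whereupon isotonicity of $i$ yields $i_p \leq i_s = j_p$, contradicting $i_p > j_p$.

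For the second assertion, suppose $x_{\ov{s,i_s}}$ divides $m^i_{i<j}$, witnessed by some $p$ with $i_p < j_p$, not $(p > s)$, and $\ov{s,i_s} = \ov{p,i_p}$. If $s = p$ the witness condition reads $i_s < j_s$, which clashes with $j_s \leq j_t \leq i_s$ coming from isotonicity of $j$ and (\ref{ProofLigst}). Otherwise left strictness together with not $(p > s)$ gives $p < s$, and the chain order in $P \times [n]^{op}$ yields $i_p \geq i_s$; combined with isotonicity $i_p \leq i_s$ this forces $i_p = i_s$. Then $j_p > i_p = i_s \geq j_t$ contradicts $j_p \leq j_t$, since $p < s < t$ and $j$ is isotone.

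The main subtlety is keeping two conventions synchronised at once: fibers live in $P \times [n]^{op}$, so the $[n]$-component inequality reverses, while ``left strict'' only upgrades the $P$-component comparison to a strict one. The substantive content is that the inequality $i_s \geq j_t$ inherited from the start of the proof, together with isotonicity pulling the indices in the opposite direction along $p < s < t$, pins the disputed indices to equalities that collide with the strict inequalities $i_p > j_p$ or $i_p < j_p$ defining the monomials $m^j_{i>j}$ and $m^i_{i<j}$; once this is seen, the two cases are essentially symmetric under swapping the roles of $i$ and $j$.
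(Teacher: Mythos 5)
Your argument is correct and follows essentially the same route as the paper: in each case you use that the fiber of $\phi$ is a left strict chain in $P \times [n]^{op}$ to force $p \leq s$ with the $[n]$-coordinate inequality reversed, then combine this with isotonicity of $i$ and $j$ and the chain $i_t \geq i_s \geq j_t \geq j_s$ from (\ref{ProofLigst}) to contradict the strict inequality defining the monomial. The only cosmetic differences are that you treat the coincidence $p=s$ as a separate case and occasionally route an inequality through $j_t$ where the paper uses $j_s$; both variants are valid.
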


\begin{proof}
1. Suppose $\ov{s, i_s} = \ov{p, i_p}$ where $i_p < j_p$ 
and not $p > s$. Since $p$ and $s$ are comparable 
(they are both in a fiber of $\phi$), we have $p \leq s$.
Since $\phi$ is isotone $i_p \leq i_s$ and since $\phi$ has
left strict chain fibers $i_p \geq i_s$. Hence $i_p = i_s$.
By (\ref{ProofLigst}) $j_s \leq i_s$ and so 
$j_p \leq j_s \leq i_s = i_p$. This contradicts $i_p < j_p$. 

2. Suppose $\ov{s,i_s} = \ov{p, j_p}$ where $j_p < i_p$ and not $p > s$.
Then again $p \leq s$ and $i_p \leq i_s \leq j_p$, giving a contradiction.
\end{proof}

If now $i_s > j_s$ then $x_{\ov{s, i_s}}$ is a factor in
$m^i_{i>j}$ but by the above, not in $m^j_{i>j}$. Since $m^\ell$ is obtained from 
$m^i$ by replacing
$m^i_{i > j}$ with $m^j_{i > j}$, we see that $m^\ell$ contains
a lower power of $x_{\ov{s,i_s}}$ than $m^i$ and so $m^\ell$ divides $m$.


\begin{claim} \label{ProofClaimLik} Suppose $i_s = j_s$. Then the power of
$x_{\ov{s,i_s}}$ in $m^i_{ >s}$ is less than or equal to its power
in $m^j_{> s}$. 
\end{claim}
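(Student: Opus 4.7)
The plan is to prove the claim by showing a set inclusion rather than comparing multiplicities directly. Set $\rho := \ov{s,i_s}$, so that, since $i_s = j_s$, also $\rho = \ov{s,j_s}$. I would introduce
\[
A = \{\, p > s : \ov{p, i_p} = \rho \,\}, \qquad B = \{\, p > s : \ov{p, j_p} = \rho \,\}.
\]
Since $m^i_{>s} = \prod_{p > s} x_{\ov{p,i_p}}$ and similarly for $m^j_{>s}$, the power of $x_\rho$ in $m^i_{>s}$ equals $|A|$ and in $m^j_{>s}$ equals $|B|$, so it suffices to establish $A \sus B$.

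Fix $p \in A$. First I would deduce $i_p = i_s$. The points $(s,i_s)$ and $(p,i_p)$ both lie in the fiber $R_1 = (\phi')^{-1}(\rho)$, which inherits from $\phi^{-1}(r)$ the structure of a left strict chain in $P \times [n]^\op$. Since $p > s$ in $P$, the chain order forces $(s,i_s) < (p,i_p)$ in $P \times [n]^\op$, giving $i_p \leq i_s$; combined with isotonicity of $i$, this forces $i_p = i_s$.

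Next I would argue $p < t$ in $P$. From the hypothesis $i_s = j_s$ together with the squeezing $i_s \geq j_t \geq j_s$ from (\ref{ProofLigst}), one extracts $j_t = i_s$, so $(t,j_t) = (t,i_s) \in R_2$. The partition $\phi^{-1}(r) = R_1 \cup R_2$ used to form $\phi'$ is, by the $P \times [n]$-analogue of the condition stated in the excerpt, the one requiring every $P$-coordinate occurring in $R_1$ to be strictly smaller than every $P$-coordinate occurring in $R_2$. Since $(p,i_p) = (p,i_s) \in R_1$ and $(t,i_s) \in R_2$, this forces $p < t$.

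Finally, with the chain $s < p < t$ in $P$, isotonicity of $j$ squeezes $j_s \leq j_p \leq j_t$, and since $j_s = j_t = i_s$ it follows that $j_p = i_s$. Hence $(p,j_p) = (p,i_s) = (p,i_p) \in R_1$, so $\ov{p,j_p} = \rho$ and $p \in B$. I expect the main obstacle to be step two: once the correct $P$-coordinate partition condition is identified, each remaining step is forced by the left strict chain structure and the isotonicity of $i$ and $j$.
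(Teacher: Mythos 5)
Your proof is correct and follows essentially the paper's argument: for each $p>s$ with $\ov{p,i_p}=\ov{s,i_s}$, use the left strict chain fiber and isotonicity to pin down $i_p=i_s$, then squeeze $j_p$ between $j_s$ and $j_t$, both equal to $i_s$. Your route to $p<t$ is slightly more direct than the paper's (you invoke the $R_1/R_2$ partition condition outright, where the paper rules out $s<t\leq p$ by an ordering contradiction in $R'$), but the underlying mechanism is the same.
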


\begin{proof}
Suppose $\ov{s, i_s} = \ov{p, i_p}$ where $p > s$. 
We will show that then $i_p = j_p$. This will prove the claim.

The above implies $\ovv{p, i_p} = \ovv{s, i_s} = \ovv{t, j_t}$,
so either $s < p < t$ or $s < t \leq p$.
If the latter holds, then since $\phi$ has left strict chain fibers,
$i_s \geq j_t \geq i_p$ and also $i_s \leq i_p$ by isotonicity, 
and so $i_s = i_p = j_t$. Thus
\[ \ov{s,i_s} \leq \ov{t, j_t} \leq \ov{p, i_p} \]
and since the extremes are equal, all three are equal contradicting
the assumption that the two first are unequal.

Hence $s < p < t$. By assumption on the fibre of $\phi$ we have 
$i_s \geq i_p$ and by isotonicity $i_s \leq i_p$ and so $i_s = i_p$.
Also by (\ref{ProofLigst}) and isotonicity
\[ i_s \geq j_t \geq j_p \geq j_s .\]
Since $i_s = j_s$ we get equalities everywhere and so $i_p = j_p$,
as we wanted to prove. 
\end{proof}

In case $i_s > j_s$ we have shown before Claim \ref{ProofClaimLik} 
that $m^\ell$ divides $m$.
So suppose $i_s = j_s$. By the above two claims, the $x_{\ov{s,i_s}}$ 
in $m^\ell$ occurs only in $m_{i=j} \cdot m^i_{ > s}$ and to 
a power less than or equal to that in $m_{i = j} \cdot m^j_{>s}$. 
Since $m^j$ divides $m x_{\ov{t,j_t}}$ and 
$\ov{s,i_s} \neq \ov{t, j_t}$ the power of $\ov{s,i_s}$
in $m^j$ is less than or equal to its power in $m$. 
Hence the power of $x_{\ov{s,i_s}}$ in $m^\ell$ is less or
equal to its power in $m$ and $m^\ell$ divides $m$.

\end{proof}

\begin{remark} \label{RegRemWOrd} Suppose $P$ has a unique
maximal element $p$. 
The above proof still holds if $\cJ$ in $\Hom(P,[n])$ is a poset ideal
for the weaker partial order $\leq^w$ on $\Hom(P,[n])$ where
the isotone maps $\phi \leq^w \psi$ if $\phi(q) \leq \psi(q)$
for $q < p$, and $\phi(p) = \psi(p)$.
\end{remark}

\bibliographystyle{amsplain}
\bibliography{Bibliography}

\end{document}